\renewcommand*\libertine@figurestyle{LF}
\renewcommand*\libertine@figurestyle{OsF}
\def\csname ver@etex.sty\endcsname{3000/12/31}
\theoremstyle{plain}                          
\newtheorem{theorem}{Theorem}
\newtheorem{proposition}[theorem]{Proposition}    
\newtheorem{lemma}[theorem]{Lemma}
\newtheorem{conjecture}[theorem]{Conjecture} 
\crefname{conjecture}{conjecture}{conjectures }      
\theoremstyle{definition}
\newtheorem{definition}[theorem]{Definition}
\newtheorem{prop-defin}[theorem]{Proposition-definition} 
\theoremstyle{remark}
\newtheorem{remark}[theorem]{Remark}
\DeclarePairedDelimiter\floor{\lfloor}{\rfloor}
\newcommand{\dd}{\mathrm{d}}
\newcommand{\mb}[1]{\mathbb{#1}} 
\newcommand{\mc}[1]{\mathcal{#1}}
\newcommand{\C}{\mb{C}}
\newcommand{\del}{\partial}
\def\oM{\overline{\mathcal{M}}}
\DeclareMathOperator*{\Res}{Res}
\begin{document}
\title{Special cases of the orbifold version of Zvonkine's $r$-ELSV formula}
\author[G.~Borot]{Ga\"etan Borot}
\address{G.~B.: Max Planck Institut f\"ur Mathematik, Vivatsgasse 7, 53111 Bonn, Germany.}
\email{gborot@mpim-bonn.mpg.de}
\author[R.~Kramer]{Reinier Kramer}
\address{R.~K.: Korteweg-de Vries Instituut voor Wiskunde, Universiteit van Amsterdam, P.O. Box 94248, 
1090 GE Amsterdam, Netherlands.}
\email{r.kramer@uva.nl}
\author[D.~Lewanski]{Danilo Lewanski}
\address{D.~L.: Korteweg-de Vries Instituut voor Wiskunde, Universiteit van Amsterdam, P.O. Box 94248, 
1090 GE Amsterdam, Netherlands.}
\email{d.lewanski@uva.nl}
\author[A.~Popolitov]{Alexandr Popolitov}
\address{A.~P.: Korteweg-de Vries Instituut voor Wiskunde, Universiteit van Amsterdam, P.O. Box 94248, 
1090 GE Amsterdam, Netherlands.}
\email{a.popolitov@uva.nl}
\author[S.~Shadrin]{Sergey Shadrin}
\address{S.~S.: Korteweg-de Vries Instituut voor Wiskunde, Universiteit van Amsterdam, P.O. Box 94248, 
1090 GE Amsterdam, Netherlands.}
\email{s.shadrin@uva.nl}
\thanks{}

\begin{abstract}
	We prove the orbifold version of Zvonkine's $r$-ELSV formula in two special cases: the case of $r=2$ (completed $3$-cycles) for any genus $g\geq 0$ and the case of any $r\geq 1$ for genus $g=0$.  
\end{abstract}

\maketitle

\tableofcontents

 

\section{Introduction}

In this paper we consider the $q$-orbifold $r$-spin Hurwitz numbers. These are weighted numbers of coverings of the Riemann sphere by (possibly disconnected) Riemann surfaces of arithmetic genus \( g\), with
\begin{itemize}
\item[$\bullet$] one branchpoint with arbitrary ramification, at which the orders of ramification are given by a vector $\mu$ of length $n \coloneqq \ell(\mu)$,
\item[$\bullet$] one branchpoint where the orders of ramification are $q,q,\dots,q$ -- we assume that $|\mu|\coloneq\sum_{i=1}^n \mu_i$ is divisible by $q$,
\item[$\bullet$] and $b := \tfrac{(2g-2+n)q+|\mu|}{qr}$ other branchpoints whose ramification profile are given by the so-called completed $(r+1)$-cycles -- we assume that $b$ is integer.
\end{itemize}
The source curve in this case can be disconnected, and we denote these numbers by $h_{g;\mu}^{\bullet, q,r}$. If we assume in addition that the source curve is connected, then we denote the resulting Hurwitz numbers by $h_{g;\mu}^{\circ, q,r}$.

There are several different sources of interest in these numbers. Completed cycles naturally emerge in the respresentation theory of the symmetric group~\cite{KerovOlshanski1994} and in the relative Gromov-Witten theory of the projective line~\cite{OkPa06a,OP}. For us the most convenient way to define these numbers combinatorially is using the semi-infinite wedge formalism, see~\cite{KLPS17}. 

The numbers $h_{g;\mu}^{\circ, q,r}$ for $q=1$ are conjecturally related to the intersection theory of the moduli spaces of $r$-spin structures~\cite{Zvonkine2006}. This conjecture is generalized to an arbitrary $q\geq 1$ in~\cite{KLPS17}. This conjecture incorporates, as special cases, the ELSV-formula for simple Hurwitz numbers~\cite{ELSV01} (the case $q=r=1$ for us) and the Johnson-Pandharipande-Tseng formula for the orbifold Hurwitz numbers~\cite{JohnsonPandharipandeTseng} (the case $r=1$, arbitrary $q\geq 1$). Let us recall it.

\begin{conjecture}[Zvonkine's $qr$-ELSV formula]\label{conjELSV} We have:
	\begin{equation}
		h_{g;\mu}^{\circ, q,r} = \prod_{j=1}^n \frac{\big(\frac{\mu_j}{qr}\big)^{\big\lfloor\frac{\mu_j}{qr}\big\rfloor }}{\big\lfloor\frac{\mu_j}{qr}\big\rfloor !}
	\times
	\frac{(qr)^{2g-2+n+\frac{(2g-2+n)q+|\mu |}{qr}}}{q^{2g-2+n}}\times
	\int_{\oM_{g,n}} \frac{\mathrm{C}_{g,\mu}^{q,r}}{\prod_{j=1}^n (1-\frac{\mu_i}{qr}\psi_i)}.
	\end{equation} 
\end{conjecture}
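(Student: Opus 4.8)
Since \cref{conjELSV} is open in general, the plan is to prove the two special cases announced in the abstract---completed $3$-cycles ($r=2$) at all genera, and arbitrary $r\geq 1$ at genus zero---by showing that both sides are computed by the Chekhov--Eynard--Orantin topological recursion on one and the same spectral curve. Concretely, I would assemble the left-hand side into the $n$-point correlation differentials $\omega_{g,n}$ built from the connected numbers $h^{\circ,q,r}_{g;\mu}$ (using the disconnected $h^{\bullet,q,r}_{g;\mu}$ to organise the combinatorics), and assemble the right-hand side into the generating series of the integrals $\int_{\oM_{g,n}} \mathrm{C}^{q,r}_{g,\mu}\big/\prod_j(1-\tfrac{\mu_j}{qr}\psi_j)$. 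The prefactors $\big(\tfrac{\mu_j}{qr}\big)^{\lfloor \mu_j/qr\rfloor}\big/\lfloor \mu_j/qr\rfloor!$ and the power of $qr$ in the conjecture are precisely those generated by the change of variables $x\mapsto z$ and the attendant Lagrange/Laplace inversion, so the entire statement is equivalent to an equality of these two families of differentials.

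For the Hurwitz side I would start from the semi-infinite wedge description of \cite{KLPS17}: $h^{\bullet,q,r}_{g;\mu}$ is a vacuum expectation of the principal completed-$(r+1)$-cycle operator dressed by the $q$-orbifold vertex. Since these are weighted (hypergeometric) Hurwitz numbers, their correlation differentials satisfy topological recursion, and I would read off the spectral curve explicitly: $x$ as an $r$-deformation of the $q$-orbifold Hurwitz curve in a local coordinate $z$, with $y$ recording the completed-cycle weight. For the intersection side, the key observation is that $\mathrm{C}^{q,r}_{g,\mu}$ is a Chiodo class, so the integrals above are the descendant correlators of a semisimple cohomological field theory. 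Givental--Teleman reconstruction together with the Dunin-Barkowski--Orantin--Shadrin--Spitz identification of the Givental formula with topological recursion then expresses them as TR correlators on a second spectral curve, whose $R$-matrix and translation I would extract from the Chiodo class. The conjecture thus reduces to the coincidence of the two spectral curves.

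The main obstacle is exactly this coincidence at \emph{all} genera, which is what forces us into special cases. At genus zero the recursion only feels $\omega_{0,1}$ and $\omega_{0,2}$---the bare curve and its Bergman kernel---so matching collapses to a closed-form identity between the genus-zero $n$-point Hurwitz functions (obtainable by Lagrange inversion) and the genus-zero Chiodo intersection numbers, which I expect to verify by an explicit though delicate computation. For $r=2$ the relevant $2$-spin Chiodo class is governed by the Witten--Kontsevich/Airy structure, whose rigidity lets the spectral-curve matching be propagated to every genus; here the sharpest point is to control the $q$-orbifold deformation of the $R$-matrix against the combinatorics of the operator $\mc{F}_3$. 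In both regimes I expect the genuine difficulty to lie not in the recursion itself but in the prefactor bookkeeping and in the analytic continuation of $y$ through the change of variables, so I would devote most of the effort to pinning those down.
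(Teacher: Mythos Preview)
Your plan has the logical architecture inverted. The equivalence you describe on the intersection-theoretic side---that the generating series of Chiodo integrals $\int_{\oM_{g,n}}\mathrm{C}^{q,r}_{g,\mu}/\prod_j(1-\tfrac{\mu_j}{qr}\psi_j)$, with exactly the prefactors in the conjecture, are the TR correlators on the curve $x=ze^{-z^{qr}}$, $y=z^q$---is already established in the cited literature (\cite{Eyna11,SSZ,Lewanski2016,DOSS14}); this is precisely the statement that \cref{conjELSV} and \cref{conjTR} are equivalent. There is therefore no ``second spectral curve'' to extract and no matching of curves to perform. The entire content of the problem is the step you dismiss in one clause: ``their correlation differentials satisfy topological recursion.'' That the $r$-spin $q$-orbifold Hurwitz differentials obey TR is not a known fact about hypergeometric tau-functions that you can invoke; it is the theorem to be proved, and neither of your case-specific arguments supplies it. In particular, your genus-zero reasoning is circular: saying the recursion ``only feels $\omega_{0,1}$ and $\omega_{0,2}$'' presupposes that the Hurwitz $\omega_{0,n}$ already satisfy the recursion, which is what is at stake. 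And for $r=2$, the $2$-spin Chiodo class with its $q$-orbifold twist is not the Witten--Kontsevich class, so appealing to Airy rigidity does not touch the Hurwitz side at all.

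What the paper actually does is attack \cref{conjTR} directly on the Hurwitz side. By \cite{BoSh15}, TR is equivalent to three analytic properties of the $\omega_{g,n}$ near the ramification points: the projection property, the linear loop equation, and the quadratic loop equation. The first two are already established in \cite{KLPS17}, so everything reduces to the quadratic loop equation. The mechanism for this is the cut-and-join equation for completed $(r{+}1)$-cycles: one rewrites it as a functional equation for the $n$-point functions $H_{g,n}$ (\cref{cutresummed}), applies the local symmetrisation operator $\mathcal{S}_{z_1}$ at a branch point, and uses the linear loop equation together with induction on the Euler characteristic to isolate the quadratic-loop combination times an invertible factor $(\mathcal{S}_{z_1}y_1)^{r-1}$. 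This works cleanly when $r=2$ (the cut-and-join operator has few terms) and when $g=0$ (the genus defect vanishes, so the equation is a pure product of $(r{+}1)$ first-derivative factors); outside these regimes the higher-defect terms in $Q_{d;K_0,m}^{(k)}$ obstruct the argument, which is why the general case remains open. The genuine difficulty is thus the analysis of the cut-and-join equation near the ramification points, not prefactor bookkeeping or analytic continuation of $y$.
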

Here we denote by $\floor*{a}$ the integral part of $a\in\mathbb{Q}$. The classes $\mathrm{C}_{g,\vec\mu}^{q,r}$ are certain classes on the moduli spaces of curves related to the twisted rational powers of the dualizing sheaf on the universal curve computed explicitly by Chiodo~\cite{Chio08}. We do not need their definition in this paper, and we refer to~\cite{Lewanski2016,KLPS17} for their description.

While the original motivation of Zvonkine in~\cite{Zvonkine2006} was related to the geometry of meromorphic differentials on curves, it has appeared to be a natural statement in a completely different context. Namely, this type of formulas emerge naturally in the context of spectral curve topological recursion developed by Chekhov, Eynard, and Orantin~\cite{ChEy06,EyOr07}. This conjecture is equivalent to the following one:
\begin{conjecture}\label{conjTR} The formal symmetric $n$-differentials 
	\begin{equation}\label{eq:definition-omega}
	d_1\otimes\cdots\otimes d_n\sum_{\mu_1, \dotsc, \mu_n =1}^\infty h_{g;\mu}^{\circ, q,r} \prod_{i=1}^n x_i^{\mu_i}, \qquad g\geq 0,\ n\geq 1,
	\end{equation} 
	are expansions in $x_1,\dotsc,x_n$ of the symmetric $n$-differentials $\omega_{g,n}(z_1,\dots,z_n)$ that are defined on
	the spectral curve given by $x(z)\coloneq  ze^{ - z^{qr}}$, $y(z)\coloneq z^q$ and satisfy the topological recursion on it. 
\end{conjecture}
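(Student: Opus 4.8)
The plan is to prove Conjecture~\ref{conjTR} directly; since it is stated to be equivalent to Conjecture~\ref{conjELSV}, this simultaneously establishes the $qr$-ELSV formula in the cases at hand. The strategy splits into the two regimes of the abstract: for $r=2$ and arbitrary $g$ one exploits that completed $3$-cycles are governed by the cubic Casimir, whose cut-and-join equation remains tractable; for $g=0$ and arbitrary $r$ one instead computes the genus-zero differentials explicitly and verifies the disk/cylinder recursion by hand.

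First I would express the disconnected generating functions of $h_{g;\mu}^{\bullet,q,r}$ as vacuum expectation values in the semi-infinite wedge formalism, following~\cite{KLPS17}. Inserting the $q$-orbifold vertex operators at the two special branchpoints and the completed-$(r+1)$-cycle operator $\mc{F}_{r+1}$ (the degree-$(r+1)$ Casimir) at each of the remaining $b$ branchpoints, the commutation relations of the $\mc{E}$-operators yield a closed combinatorial expression for the $n$-point functions. After the change of variables $x_i = x(z_i) = z_i e^{-z_i^{qr}}$, the aim is to show that each generating series \eqref{eq:definition-omega} is the expansion near $z_i=0$ of a single symmetric meromorphic $n$-differential $\omega_{g,n}(z_1,\dots,z_n)$ on the spectral curve.

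Next I would pin down the analytic structure. From the explicit $n$-point formula one should read off that, in the variable $z$, the $\omega_{g,n}$ extend to global meromorphic differentials with poles only at the $qr$ ramification points, i.e.\ the zeros of $dx = (1-qr\,z^{qr})e^{-z^{qr}}\,dz$, and that $\omega_{0,1}=y\,dx=z^q\,dx$ while $\omega_{0,2}=\frac{dz_1\,dz_2}{(z_1-z_2)^2}$ is the standard bidifferential. The \emph{linear loop equations} -- invariance under the local deck transformation up to a holomorphic part -- follow from the symmetry of the fermionic correlators under the local Galois action at each ramification point.

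The crux is the \emph{quadratic loop equations}, which encode the recursion itself. For $r=2$ I would derive them from the cut-and-join equation associated with $\mc{F}_3$: its action on the partition function is a second-order operator in the times, and translating this into the $z$-variables should produce exactly the quadratic combination of $\omega$'s with the recursion kernel built from $\omega_{0,1}$ and $\omega_{0,2}$. Together with the linear loop equations and the polar structure, the equivalence between abstract loop equations and topological recursion (Borot--Eynard--Orantin) then identifies $\{\omega_{g,n}\}$ with the topological recursion output on $(x,y)$. The main obstacle, and the reason for the restriction, is that for $r>2$ the higher Casimir $\mc{F}_{r+1}$ produces a cut-and-join operator of order $r+1$ whose equation does not close into a usable quadratic relation in arbitrary genus; this is why the general-$r$ case is treated only at $g=0$, where the system collapses to the genus-zero recursion and can be checked directly from the explicit form of $\omega_{0,n}$.
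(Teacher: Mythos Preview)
Your overall architecture---reduce topological recursion to the projection property plus linear and quadratic loop equations via \cite{BoSh15}, import the first two from \cite{KLPS17}, and attack the quadratic loop equation through cut-and-join---matches the paper for $r=2$. Two corrections are needed there. First, the cut-and-join operator $Q_3$ for completed $3$-cycles is \emph{third} order in the times (it contains $p\,\partial^3$, $p^2\partial^2$, $p^3\partial$ and $p\,\partial$ pieces), not second order; the argument works not because the operator is quadratic but because, after symmetrising the resulting equation for $H_{g,n}$ in one variable and expanding $\mathcal{S}f(z,z,z)$ into $\mathcal{S}\mathcal{S}\mathcal{S}$, $\mathcal{S}\Delta\Delta$ and $\Delta\Delta\mathcal{S}$ pieces, every non-holomorphic contribution factors as a known quadratic-loop expression times something invertible, and one closes by induction on the Euler characteristic. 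Second, the linear loop equations are not an immediate consequence of ``Galois symmetry of fermionic correlators''; they are a nontrivial input from \cite{KLPS17} that you should cite rather than claim to rederive.

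The genuine gap is your plan for $g=0$. You propose to ``compute the genus-zero differentials explicitly'' and check the recursion ``directly from the explicit form of $\omega_{0,n}$''. No such explicit form exists for general $n,q,r$; already $\omega_{0,3}$ is unwieldy, and there is no closed formula to plug into the recursion. The paper does \emph{not} compute $\omega_{0,n}$. It treats $g=0$ by the same cut-and-join mechanism as $r=2$: the reason the order-$(r{+}1)$ equation becomes tractable in genus zero is that the genus constraint forces genus defect $d=0$ and $\ell=m=r{+}1$, collapsing the equation to a sum of $(r{+}1)$-fold products $\prod_{j=1}^{r+1} D_{x_k}\tilde{H}_{0,|K_j|+1}(x_{K_j},x_k)$. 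One then symmetrises in $z_1$, expands via $\mathcal{S}f(z,\dots,z)=2^{-r}\sum_{|J|\text{ even}}(\prod_{I}\mathcal{S})(\prod_{J}\Delta)f$, and proves a combinatorial lemma showing that each block of $2t$ skew-symmetrisations factors into a product of $t$ quadratic-loop expressions $E(K_j,x_1)$. By induction on $n$ all of these are holomorphic except the top one, whose prefactor has leading term $\binom{r+1}{2}(\mathcal{S}_{z_1}y_1)^{r-1}$ and is invertible near each ramification point. Your proposed ``direct verification from explicit $\omega_{0,n}$'' has no starting point; replace it with this symmetrised cut-and-join induction.
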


The equivalence of these two conjectures is proved in~\cite{Eyna11} for $q=r=1$, in~\cite{SSZ} for $q=1$, arbitrary $r\geq 1$, and it follows from the result of~\cite{Lewanski2016} in general case. In fact, it is a very general statement that the topological recursion produces the integrals over the moduli spaces of curves of this type, see~\cite{Eyna11a,DOSS14}. This second conjecture was first proposed in the case $q=r=1$ in~\cite{BouchardMarino}. The computations supporting this generalization are available in~\cite{MSS},~\cite{Lewanski2016}, and~\cite{KLPS17}.

If we want to use the equivalence of these two conjectures in order to prove the ELSV-type formulas proposed in the first conjecture, we have to prove the second conjecture independently. As of writing, independent proofs of the second conjecture are known in the case $q=r=1$~\cite{DKOSS13} and $r=1$, arbitrary $q\geq 1$~\cite{DLPS}. A key property, the proof of the combinatorial structure dictated by the corresponding ELSV-type formula (see \cref{KeyP}), was also proved differently and independently of \cref{conjELSV,conjTR} in~\cite{KLS}.

In this paper we prove the second conjecture in two new series of cases, namely
\begin{itemize}
	\item[$\bullet$] for $r=2$, and arbitrary $q\geq 1$, $g\geq 0$ (\cref{thm:r3});
	\item[$\bullet$] for $g=0$, and arbitrary $q,r\geq 1$ (\cref{thm:g0tr}).
\end{itemize}
Thus we fully prove \cref{conjELSV} in genus $0$, and also in any genus for the completed $3$-cycles. 

Let us discuss the strategy of the proof. We take the approach to the topological recursion proposed in~\cite{BEO13,BoSh15}. It is proved in~\cite{KLPS17} that the formal power series in $x_1,\dotsc,x_n$ in \cref{eq:definition-omega} is the expansion of a symmetric $n$-differential form defined on the spectral curve identified from the case $(g,n) = (0,1)$. Then the topological recursion is equivalent to the following three properties of these symmetric differentials:
the projection property, the linear loop equation, and the quadratic loop equation~\cite[theorem 2.1]{BoSh15}. The projection property and the linear loop equation are also proved in~\cite{KLPS17}. Thus, \cref{conjTR} is reduced to the quadratic loop equation.

The quadratic loop equation is, therefore, the main problem that we address in this paper. Let us briefly recall it in a convenient form. Consider the function $x=z e^{- z^{qr}}$. It has $qr$ branch points $\rho_1,\dots,\rho_{qr}$. We choose one of them, $\rho_i$. Denote by $\sigma_i$ the corresponding deck transformation. For any function $f(z)$ we define its local skew-symmetrization $\Delta_i(f)(z)\coloneq f(z)-f(\sigma_i(z))$. Then the quadratic loop equation is equivalent to the property that 
\begin{equation}
\left.\Delta_i'\Delta_i''\left(
\frac{
	\omega_{g-1,n+2}(z',z'',z_{[n]})+\sum_{\substack{g_1+g_2=g \\ I_1\sqcup I_2 = [n]}} \omega_{g_1,|I_1|+1}(z',z_{I_1})
	\omega_{g_2,|I_2|+1}(z',z_{I_2})
}{dx(z')dx(z'') \prod_{i = 1}^n dx(z_i)}
\right) \right|_{z'=z''=z}
\end{equation}
is holomorphic in $z$ near the point $p_i$. Here by $\Delta'_i$ and $\Delta''_i$ we mean the operator $\Delta_i$ acting on the variables $z'$ and $z''$ respectively. By $[n]$ we denote the set $\{1,\dots,n\}$ (and we use this notation throughout the paper). By $z_I$, $I\subset [n]$, we denote the set of variables with indices in $I$, for instance, $z_{[n]}\coloneq \{z_1,\dots,z_n\}$. This property should be satisfied for any $i=1,\dots,qr$ and for any $g\geq 0$, $n\geq 0$. 

In order to prove the quadratic loop equation we use the cut-and-join equation for the completed $(r+1)$-cycles derived in~\cite{Rossi,SSZ12,Alexandrov}. We rewrite the cut-and-join equation as an equation for the $n$-point functions $H_{g,n}(x_{[n]})\sim \sum_{\ell(\vec\mu)=n} h_{g;\vec{\mu}}^{\circ, q,r} \prod_{i=1}^n x_i^{\mu_i}$ (\cref{cutresummed}). In the special cases of completed $3$-cycles (\( r=2\)) and genus $0$ (any $r\geq 1$) this equation has a particularly nice form that allows us to derive the quadratic loop equation using the symmetrization of this equation in one variable. 

\subsection{Organization of the paper}
In \cref{sec:definition-CJ} we recall the definition of the $q$-orbifold $r$-spin Hurwitz numbers. In \cref{corandcj}, we derive the cut-and-join equation, and give explicit formulas for the genus $0$ with $\ell(\mu) \in \{1,2\}$ and genus $1$ with $\ell(\mu) = 1$. In \cref{sec:Completed3Cycle} we revisit the computation of the previous section in the particular case of $r=2$ (completed $3$-cycles). In \cref{sec:2-spinTR} we prove \cref{conjTR}, and, therefore, \cref{conjELSV} for $r=2$. In \cref{sec:genus0} we prove \cref{conjTR}, and, therefore, \cref{conjELSV} for $g=0$. 

\subsection{Acknowledgments} R.~K., D.~L., A.~P., and S.~S. were supported by the Netherlands Organization for Scientific Research. G.B. is supported by the Max Planck Gesellschaft, and thanks the University of Amsterdam for hospitality and support during this project. A.P. is also partially supported by RFBR grant 16-01-00291. We thank Dimitri Zvonkine for very useful discussions. In particular, though it has never been written down, he has developed alternative approaches to the proof of his conjecture in the special cases that we consider in this paper.

\section{Spin orbifold Hurwitz numbers}\label{sec:definition-CJ}

We define the Hurwitz numbers under consideration \textit{via} the semi-infinite wedge formalism. For more on this formulation, see~\cite{KLPS17}.
\begin{definition}
The \emph{disconnected \( r\)-spin \( q\)-orbifold numbers \( h_{g;\mu}^{\bullet,r,q} \)} are defined in the infinite wedge formalism by
\begin{equation}
h_{g;\mu}^{\bullet,r,q} \coloneqq \bigg\langle \frac{\alpha_q^{|\mu|/q}}{q^{|\mu |/q}(|\mu|/q)!} \frac{\mc{F}_{r+1}^b}{(r+1)^b} \prod_{i=1}^{\ell (\mu )} \frac{\alpha_{-\mu_i}}{\mu_i} \bigg\rangle\,,
\end{equation}
where the number of simple ramification points is given by
\begin{equation}
b = \frac{(2g - 2 + \ell(\mu))q + |\mu |}{qr}\,.
\end{equation}
We will reserve the symbol \( b\) for this quantity, sometimes inferring \( g\) from it. Here, $\mc{F}_{r + 1}$ is the operator of multplication by the $(r + 1)$-completed cycle, $(\alpha_{m})_{m \in \mathbb{Z}}$ the generators of the Heisenberg algebra acting on the semi-infinite wedge, and $\langle \cdots \rangle$ the sandwich between two vacuum states.
The \emph{connected \( r\)-spin \( q\)-orbifold Hurwitz numbers \( h_{g;\mu}^{\circ,r,q}\)} are defined from the disconnected ones \textit{via} the inclusion-exclusion principle.
\end{definition}

Let \( \mc{R} \) be the ring of formal power series in countably many formal commuting variables \( p_1, p_2, \dotsc \). If $\mu $ is a tuple of nonnegative integers $(\mu_1,\ldots,\mu_n)$, we denote $p_{\mu} = \prod_{i = 1}^n p_{\mu_i} \in \mathcal{R}$. Let \( \mc{R}_n \) be the subspace of \( \mc{R} \) consisting of formal power series with monomials of degree \( n\) in the \( p_m \). The operator $D$ defined by \( D(p_{\mu}) = |\mu |p_{\mu} \) is a derivation on \( \mc{R} \).\par 
\begin{definition}
We define the partition function of $r$-spin \( q\)-orbifold Hurwitz numbers
\begin{equation}
Z^{r,q} = \exp\Big(\sum_{\substack{g \geq 0 \\ n\geq 1}} \frac{1}{n!}\,G^{r,q}_{g,n}\Big)\,, \qquad \qquad \qquad G^{r,q}_{g,n} = \sum_{\mu_1,\ldots,\mu_n \geq 0} h_{g;\mu} ^{\circ,r,q} \frac{ \beta^b}{b!}\,p_{\mu}\,.
\end{equation}
Often, we will omit the superscripts \( r\) and \( q\). \par
Note that \( G_{g,n}^{r,q} \) is the homogeneous component of the sum in the exponent which is of degree \( n\) in the \( p\)'s and of degree \( 2g-2+n \) in a second degree, where \( \deg \beta = r\) and \( \deg p_\mu = -\frac{\mu}{r} \). On the other hand, \( Z^{r,q} \) is a formal power series.\par
\end{definition}

This partition function is characterized by the cut-and-join equation, as follows. We define the function
\begin{equation}
\zeta(z) = e^{z/2} - e^{-z/2}\,,
\end{equation}
and for bookkeeping we give the formula
\begin{equation}
\frac{1}{\zeta(z)} = \sum_{k \geq 0} \frac{(2^{1 - 2k} - 1)B_{2k}}{2k!}\,z^{2k - 1} = \frac{1}{z} - \frac{z}{24} + \frac{7z^3}{5760} + O(z^5)\,.
\end{equation}
involving the Bernoulli numbers $B_{2k}$. The cut-and-join operator $Q_r $ is defined by the generating series $Q(z) = \sum_{r \geq 1} Q_r\,z^r$ where
\begin{equation}
\label{cutjoinops}
Q(z) =  \frac{1}{\zeta(z)} \sum_{s \geq 1} \Big(\sum_{\substack{n \geq 1 \\ k_1 + \cdots + k_n = s}} \frac{1}{n!}  \prod_{i = 1}^n \frac{\zeta(k_iz)\,p_{k_i}}{k_i}\Big)\Big(\sum_{\substack{m \geq 1 \\ l_1 + \cdots + l_m = s}} \frac{1}{m!} \prod_{j = 1}^m \zeta(l_jz)\partial_{p_{l_j}}\Big)\,.
\end{equation}
It is well-known, see e.g. \cite[Theorem 5.5]{SSZ12}, that
\begin{equation}
\label{cutjoinr}\Big(\frac{1}{r!}\,\frac{\partial}{\partial \beta} - Q_{r+1}\Big)\,Z^{r,q} = 0\,.
\end{equation}

\section{Correlators and cut-and-join equations}
\label{corandcj}

\subsection{Correlators}

We describe an equivalent way to repackage $r$-spin \( q\)-orbifold Hurwitz numbers. Let $\mc{S}_n$ be the ring of symmetric analytic functions in $n$ variables, and consider the injective morphism of vector spaces $\Phi \colon \mc{R}_{n} \rightarrow \mc{S}_n$ which sends $p_{\mu}$ to the symmetric monomial
\begin{equation}
{\rm M}_{\mu}(x_1,\ldots,x_n) = \frac{1}{n!}\sum_{\sigma \in \mathfrak{S}_n} \prod_{i = 1}^n x_i^{\mu_{\sigma(i)}}\,.
\end{equation}
We denote $D_{x_i}$ the operator $x_i\partial_{x_i}$. It is consistent with the previous notation in the sense that
\begin{equation}
\forall f \in \mathcal{R}_{n},\qquad \Phi(Df) = \Big(\sum_{i = 1}^n D_{x_i}\Big) \Phi(f)\,.
\end{equation}
We introduce one more notation: if $I$ is a set, $J \subseteq I$ a subset, and $(x_i)_{i \in I}$ is a tuple of variables, $x_{J}$ stands for $(x_j)_{j \in J}$.

\begin{definition}
We define the correlators as
\begin{equation}
H^{r,q}_{g,n}(x_1,\ldots,x_n) = \Phi(G^{r,q}_{g,n})\big|_{\beta = 1}\,,
\end{equation}
\end{definition}
We would like to write down the cut-and-join equation \eqref{cutjoinr} solely in terms of the correlators. For this purpose we define operators $Q_{d;K_0,m}^{(k)}$ by a generating series
\begin{equation}
\label{Qrdef} \sum_{d \geq 0} Q_{d;K_0,m}^{(k)}\,z^{2d} = \frac{z}{\zeta(z)} \prod_{i \in K_0 \cup \{ k\} } \frac{\zeta(zD_{x_i})}{zD_{x_i}} \circ \prod_{j = 1}^m \frac{\zeta(zD_{\xi_j})}{z}\bigg|_{\xi_j = x_k}
\end{equation}
involving tuples of variables $(x_i)_{i \in K_0}$ and $m$ dummy variables denoted $(\xi_j)_{j = 1}^m$. The result of application of $Q_{d;K_0,m}^{(k)}$ to $F(x_{K_0},\xi_{1},\ldots,\xi_{m})$ only involves the variables $x_{K_0}$ and \( x_k \). In this definition, we stress that the operator $D_{x_k}$ in the first factor acts on the variable $x_k$ created by specialization of all $\xi_j$ to $x_k$. We have for instance
\begin{align}
Q_{0;K_0,m}^{(k)} & = \prod_{j = 1}^m D_{\xi_j}\bigg|_{\xi_j = x_k}\, ; \\ 
Q_{1;K_0,m}^{(k)} & = \frac{1}{24} \Bigg( D_{x_k}^2 \circ \bigg[\prod_{j = 1}^m D_{\xi_j}\Big|_{\xi_j = x_k}\bigg] + \Big(\sum_{i \in K_0} D_{x_i}^2 + \sum_{j=1}^m D_{\xi_j}^2  - 1\Big)\prod_{j = 1}^m D_{\xi_j}\bigg|_{\xi_j = x_k} \Bigg) \, .
\end{align}

\begin{proposition}
\label{cutjco}For any $g \geq 0$ and $n \geq 1$, we have
\begin{equation}
\label{cutandjfull}
\begin{split}
 \frac{B_{g,n}}{r !} H_{g,n}(x_{[n]})
& = \!\!\! \sum_{\{ k\} \sqcup \bigsqcup_{j=0}^\ell K_j = [n]} \! \frac{1}{l!} \! \sum_{\substack{m\geq 1,d \geq 0 \\ |K_0| + m + 2d = r+1}}\!\!\! \frac{1}{m!} \sum_{\substack{ \bigsqcup_{j=1}^\ell M_j = [m] \\ M_j \neq \emptyset}} \,\sum_{\substack{g_1,\ldots,g_{\ell} \geq 0 \\ g = \sum_{j = 1}^{\ell} g_j + m - \ell + d}}\\
& \qquad \qquad Q_{d;K_0,m}^{(k)} \bigg[\prod_{i \in K_0} \frac{x_i}{x_k - x_i} \prod_{j = 1}^{\ell} H_{g_j,|K_j| + |M_j|}(x_{K_j},\xi_{M_j})\bigg]\,,
\end{split}
\end{equation}
where \( B_{g,n} \coloneqq \frac{1}{r} \big(2g-2+n+ \frac{1}{q} \sum_{i=1}^n D_{x_i} \big) \).
\end{proposition}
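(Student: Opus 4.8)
The plan is to translate the operator cut-and-join equation \eqref{cutjoinr} for the full partition function $Z^{r,q}$ into an equation for the correlators $H_{g,n}$ by applying the morphism $\Phi$ and extracting the appropriate homogeneous component. The strategy proceeds in three main stages: first, rewrite the action of $Q_{r+1}$ on the exponentiated series in terms of connected correlators; second, carefully track how the Heisenberg-type creation/annihilation structure of $Q(z)$ translates under $\Phi$ into the differential operators $Q_{d;K_0,m}^{(k)}$; and third, match the combinatorial bookkeeping of the genus, the distribution of variables, and the $\beta$-grading on both sides.

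First I would expand $Q_{r+1} Z^{r,q}$ using $Z^{r,q} = \exp(\sum_{g,n} \tfrac{1}{n!} G_{g,n})$. The operator $Q(z)$ in \eqref{cutjoinops} is a sum over products of $n$ multiplication operators $\tfrac{\zeta(k_i z)p_{k_i}}{k_i}$ (creation) and $m$ derivations $\zeta(l_j z)\partial_{p_{l_j}}$ (annihilation), constrained so the two sides carry equal total weight $s = \sum k_i = \sum l_j$, all divided by $\tfrac{1}{\zeta(z)}$. When this acts on $\exp(\sum G_{g,n})$, the $m$ derivatives $\partial_{p_{l_j}}$ hit the connected pieces, and by the standard exponential (connected-to-disconnected) combinatorics each derivative either lands on one of $\ell$ distinct connected factors $G_{g_j,\bullet}$ or creates a new one. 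After dividing by $Z$ to pass back to connected correlators, this produces the sum over set partitions $\bigsqcup_{j=1}^\ell M_j = [m]$ of the $m$ annihilation slots among the $\ell$ surviving connected factors, with each $M_j \neq \emptyset$, exactly as in \eqref{cutandjfull}. The $n$ creation operators split into those that will be joined to the chosen connected factors and a distinguished variable: the $\tfrac{1}{l!}$ and the variable distribution $\{k\} \sqcup \bigsqcup K_j = [n]$ arise from symmetrizing these attachments.

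Next I would apply $\Phi$ and pass to the $x$-variables. Under $\Phi$, multiplication by $p_{k}$ and the operator $D$ become the symmetric-function operations recorded in the excerpt ($\Phi(Df) = \sum_i D_{x_i}\Phi(f)$), and the key computation is to recognize that the generating series $\tfrac{z}{\zeta(z)}\prod \tfrac{\zeta(zD)}{zD}\prod \tfrac{\zeta(zD)}{z}$ defining $Q^{(k)}_{d;K_0,m}$ is precisely the image of the relevant combination of $\zeta(k_i z)/k_i$ and $\zeta(l_j z)$ factors once the creation variables carrying no derivative (indexed by $K_0$) are converted to the rational prefactors $\prod_{i\in K_0}\tfrac{x_i}{x_k - x_i}$ via resummation of the geometric-type series in the $p_{k_i}$. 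The specialization $\xi_j = x_k$ encodes that all the annihilated/recreated parts attach at the same distinguished point $x_k$. Extracting the coefficient of $z^{r+1}$ in $Q(z)$ imposes $|K_0| + m + 2d = r+1$ (the $2d$ from the even Taylor expansion of $z/\zeta(z)$ and the $\zeta(zD)/z$ factors), and the $\beta$-degree matching on the left produces the operator $B_{g,n} = \tfrac{1}{r}(2g-2+n+\tfrac1q\sum D_{x_i})$ from $\tfrac{1}{r!}\partial_\beta$ acting on $\beta^b/b!$ together with the Euler-type grading.

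The \textbf{main obstacle} I expect is the precise bookkeeping in the second stage: correctly resumming the infinite family of creation operators indexed by $K_0$ into the closed rational factor $\prod_{i\in K_0}\tfrac{x_i}{x_k - x_i}$, and simultaneously verifying that the genus constraint $g = \sum_{j=1}^\ell g_j + m - \ell + d$ comes out right. The genus shift $m - \ell + d$ is the delicate point: $m-\ell$ counts the handles created when $m$ annihilation slots are fused into $\ell$ connected components (each fusion beyond the first within a component raises the genus), while $d$ is the contribution of the subleading Bernoulli corrections in the $\zeta$-expansions. Getting every symmetry factor ($\tfrac{1}{l!}$, $\tfrac{1}{m!}$, the $n!$ from $\Phi$, the $\tfrac{1}{n!}$ in $Z$) to cancel consistently, so that no spurious multiplicities survive, is the calculation that demands the most care; everything else is a faithful but routine transcription of the operator identity \eqref{cutjoinr} into the correlator language.
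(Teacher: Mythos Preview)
Your plan is the paper's own proof: take $\tfrac{1}{r!}\partial_\beta \ln Z = [z^{r+1}]\,Z^{-1}Q(z)Z$, extract the homogeneous component, apply $\Phi$, and bookkeep. One point needs correction, though. You write that ``the $n$ creation operators split into those that will be joined to the chosen connected factors and a distinguished variable'', but this misidentifies the origin of the sets $K_1,\ldots,K_\ell$. Only a subset $L = \{k\} \sqcup K_0 \subseteq [n]$ of the $x$-variables comes from the creation factors $p_{k_i}$ in $Q(z)$; the remaining variables, partitioned as $K_1 \sqcup \cdots \sqcup K_\ell = [n]\setminus L$, are the \emph{undifferentiated} $p$-arguments still sitting inside the connected pieces $G_{g_j,|K_j|+|M_j|}$ after the $\partial_{p_{l_j}}$ have acted. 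The rational factor $\prod_{i \in K_0}\tfrac{x_i}{x_k-x_i}$ then arises not from a bare geometric resummation but from the partial-fraction and residue identity
\[
\sum_{k_1,\ldots,k_{|L|}\geq 1}\oint \frac{\dd\xi}{2\pi i}\,\frac{\prod_{i\in L} x_i^{k_i}}{\xi^{1+\sum k_i}}\,F(\xi) \;=\; \sum_{k\in L} F(x_k)\prod_{i\in L\setminus\{k\}}\frac{x_i}{x_k-x_i},
\]
applied with $F(\xi)$ equal to the annihilation block specialized at $\xi$. With this adjustment your genus accounting $g=\sum g_j + (m-\ell) + d$ and the symmetry factors $\tfrac{1}{\ell!}\tfrac{1}{m!}$ line up exactly with the paper.
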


The integer \( \sum_{j = 1}^{\ell} g_j + m - \ell \) is the genus of a surface obtained by glueing along boundaries a sphere with $m$ boundaries to a surface with $\ell$ connected components of respective genera $g_j$ and numbers of boundaries $|M_j|$, such that $\sum_j |M_j| = m$. Hence, $d$ can be interpreted as a \emph{genus defect}. When $g = 0$, we must have $\ell = m$, $g_j = 0$ for all $j$ and $d = 0$ in this equation, and it becomes a functional equation involving $H_{0,n'}$ only. For $(g,n) \neq (0,1)$, $H_{g,n}$ always appears in the right-hand side of \cref{cutandjfull} in the terms where $K_0 = \emptyset $, $\ell = m$, $d = 0$, $K_a = V \setminus \{ k\}$ for some $a \in [\ell ]$. They contribute to a term
\begin{equation}
\sum_{k = 1}^n \frac{\big( D_{x_k}H_{0,1}(x_k)\big)^r}{r!}\,D_{x_k}H_{g,v}(x_{[n]})
\end{equation}
For \( (g,n) = (0,1)\), the same term contributes, but it collapses to \( \frac{1}{r!} \big( D_{x_1}H_{0,1}(x_1)\big)^{r+1} \).

\begin{proof} We examine the homogeneous component of degree $n$ in the \( p\)'s and degree \( 2g-2+n-r\) in the grading where \( \deg \beta = r \) and \( \deg p_\mu = -\frac{\mu}{q} \) (effectively tracking the genus) in
\begin{equation}
\label{cutj} \frac{1}{r!}\,\frac{\partial}{\partial \beta}\,\ln Z = [z^{r+1}]\,Z^{-1}Q(z) Z\,. \end{equation}
The left-hand side of \cref{cutj} is
\begin{equation}
\frac{2g - 2 + n + \frac{1}{q}D}{r\cdot r!}\, \,.
\end{equation}
Applying $\Phi$ will replace $p_{\mu}$ by monomials $x_1^{\mu_1}\cdots x_n^{\mu_n}$. Let us consider the effect of the same operation in the right-hand side of \cref{cutj} before extracting the coefficient of $z^{r+1}$. A non-empty subset $L \subseteq [n]$ of the variables $(x_i)_{i = 1}^n$ will be used in the replacement of $\prod_i p_{k_i}$ from \( Q \). This will produce $\prod_{i \in L} x_i^{\mu_i}$ where $\mu$ is a permutation of $k$.\par
By a standard trick, \( e^{-F} (\prod \Delta_i) e^F = \prod (\Delta_i F)\) for differential operators \( \Delta_i \), so the other variables will appear by
\begin{equation}
\prod_{j = 1}^{\ell} \Big(\prod_{i \in M_j} \partial_{p_{l_i}}\Big) G_{g_j,|K_j| + |M_j|}\,,
\end{equation}
where $(M_j)_{j = 1}^{\ell}$ is a partition of $[m]$ into non-empty subsets, $(K_j)_{j = 1}^{\ell}$ is a partition of $[n] \setminus L$ by possibly empty subsets, $(g_j)_{j = 1}^{\ell}$ is a sequence of nonnegative integers remembering the power of $\beta $ pulled out by the derivations acting on the exponential generating series $Z$. We have the constraint
\begin{equation}
\label{genusc} 2g - 2 + n -r = \sum_{j = 1}^{\ell} \big(2g_j - 2 + |K_j| + |M_j|\big)
\end{equation}
coming from identification of the exponent of $\beta $.

More precisely, the contribution of the variables in $[n] \setminus L$ will be of the form 
\begin{equation}
\oint \frac{\dd\xi}{2{\rm i}\pi \xi^{s + 1}} \bigg[\prod_{i = 1}^m \zeta(zD_{\xi_i}) \prod_{j = 1}^{\ell} H_{g_j,|K_j| + |M_j|}(x_{K_j},\xi_{M_j})\bigg]\bigg|_{\xi_a = \xi}\,,
\end{equation}
where $s = k_1 + \cdots + k_n$. The variables $x_L$ then contribute to
\begin{equation}
\label{cope}\Big[\prod_{i \in L} D_{x_i}^{-1}\zeta(zD_{x_i})\,x_i^{k_i}\Big] \oint \frac{\dd\xi}{2{\rm i}\pi \xi^{k_1 + \cdots + k_n + 1}} \bigg[\prod_{i = 1}^m \zeta(zD_{\xi_i}) \prod_{j = 1}^{\ell} H_{g_j,|K_j| + |M_j|}(x_{K_j},\xi_{M_j})\bigg]\bigg|_{\xi_a = \xi}\,.
\end{equation}
We should then perform the sum over positive $k$'s, using
\begin{equation}
\label{sumk}\sum_{k_1,\ldots,k_n \geq 1} \frac{x_1^{k_1}\cdots x_n^{k_n}}{\xi^{k_1 + \cdots + k_n + 1}} = \frac{1}{\xi} \prod_{i = 1}^n \frac{x_i}{\xi - x_i} = \frac{(-1)^{n}}{\xi} + \sum_{k = 1}^n \frac{1}{\xi - x_k} \prod_{i \neq k} \frac{x_i}{x_k - x_i}\,.
\end{equation}
The kind of computation we must do with this expression is a contour integral/extraction of coefficient
\begin{equation}
\sum_{k_1,\ldots,k_n \geq 1} \oint \frac{\dd\xi}{2{\rm i}\pi}\,\frac{x_1^{k_1}\cdots x_n^{k_n}}{\xi^{k_1 + \cdots + k_n + 1}}\,F(\xi)\,,
\end{equation}
where $F$ is some formal power series in $\xi$ without constant term. The term $\xi^{-1}$ in \cref{sumk} does not contribute and we find
\begin{equation}
\sum_{k_1,\ldots,k_n \geq 1} \oint \frac{\dd\xi}{2{\rm i}\pi}\,\frac{x_1^{k_1}\cdots x_n^{k_n}}{\xi^{k_1 + \cdots + k_n + 1}}\,F(\xi) = \sum_{k = 1}^n \Big[\prod_{i \neq k} \frac{x_i}{x_k - x_i}\Big] F(x_k)\,.
\end{equation}
We use this formula with the set of variables $(x_i)_{i \in L}$ rather that $(x_i)_{i = 1}^n$, and with
\begin{equation}
F(\xi) = \bigg[\prod_{j = 1}^m \zeta(zD_{\xi_j})\prod_{j = 1}^{\ell} H_{g_j,|K_j| + |M_j|}(x_{K_j},\xi_{M_j})\bigg]\bigg|_{\xi_a = \xi}\,.
\end{equation}
We should then apply the operator $\prod_{i \in K_0} D_{x_i}^{-1}\zeta(zD_{x_i})$ as it appeared in \cref{cope}, perform all necessary sums and finally extract the coefficient of $z^{r+1}$ to obtain $\Phi$ applied to the right-hand side. In this process, one has to carefully track the symmetry factors (there is a factor \( \frac{1}{l!} \) because the set partitions of \( [n] \) and \( [m] \) should be unordered but paired and a factor \( \frac{1}{m!} \) because all \( \xi \) are identical), and the outcome is
\begin{equation}
\label{combi}
\begin{split}
&[z^{r+1}] \sum_{m \geq 1} \frac{1}{m!} \sum_{\substack{L \sqcup \bigsqcup_{j=1}^\ell K_j = [n] \\ \bigsqcup_{j=1}^\ell M_j = [m] \\ L,M_1,\ldots,M_{\ell} \neq \emptyset \\ g_1,\ldots,g_{\ell} \geq 0}} \!\! \frac{1}{l!} \sum_{k \in L} \delta\Big(r + \sum_{j = 1}^{\ell} (2g_j - 2 + |K_j| + |M_j|) - (2g - 2 +n)\Big)\\
&\qquad \qquad \times \zeta^{-1}(z) \prod_{i \in L} D_{x_i}^{-1}\zeta(zD_{x_i}) \bigg[\prod_{j = 1}^m \zeta(zD_{\xi_j}) \prod_{i \in L \setminus \{k\}} \frac{x_{i}}{x_k - x_i} \prod_{j = 1}^{\ell} H_{g_j,|K_j| + |M_j|}(x_{K_j},\xi_{M_j})\bigg]\bigg|_{\xi_a = x_{i_0}}\,,
\end{split}
\end{equation}
where the Kronecker delta imposes the genus constraint \cref{genusc}. Since by definition $\sum_{j = 1}^{\ell} |M_j| = m$ and $\sum_{j = 1}^{\ell} |K_j| = n - |L|$, this genus constraint can be rewritten
\begin{equation}
g = \sum_{j= 1}^{\ell} g_j + (m - \ell) + d\,,
\end{equation}
with $d \geq 0$ defined by the formula \( m + |L| - 1 + 2d = r+1 \).\par
Let us rewrite \( L = K_0 \sqcup \{ k\} \), and notice that the operators $Q_{d;K_0,m}^{(k)}$ were precisely defined in \cref{Qrdef} so that
\begin{equation}
[z^{r+1}] \,\,\zeta^{-1}(z) \prod_{i \in K_0 \cup \{ k\}} D_{x_i}^{-1}\zeta(zD_{x_i})\Big[\prod_{j = 1}^m \zeta(zD_{\xi_j})\Big]\Big|_{\xi_j = x_k} = Q_{d;K_0,m}^{(k)}\,,
\end{equation}
and this puts \cref{combi} in the claimed form. 
\end{proof}

\subsection{Spectral curve: \texorpdfstring{$(g,n) = (0,1)$}{(g,n)=(0,1)} and \texorpdfstring{$(0,2)$}{(0,2)}.}

\begin{lemma}\label{H01}
Let $y(x) = D_{x}H_{0,1}(x)$. We have $x = y(x)^{\frac{1}{q}}e^{-y^{r}(x)}$.
\end{lemma}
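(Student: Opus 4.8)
The plan is to specialize the cut-and-join equation of \cref{cutjco} to $(g,n)=(0,1)$ and then integrate the resulting first-order ODE for $y = D_x H_{0,1}$. The point is that for $(0,1)$ the right-hand side of \cref{cutandjfull} degenerates almost completely: the only surviving configuration has $K_0 = \emptyset$, $\ell = m = r+1$, $d = 0$, and all blocks $M_j$ singletons, so that $Q_{0;\emptyset,r+1}^{(1)} = \prod_{j=1}^{r+1} D_{\xi_j}\big|_{\xi_j = x}$ simply produces $(D_x H_{0,1})^{r+1} = y^{r+1}$. Combining this with $B_{0,1} = \tfrac1r\big(\tfrac1q D_x - 1\big)$ turns the equation into a closed relation of the form $\big(\tfrac1q D_x - 1\big) H_{0,1} = \tfrac{r}{r+1}\,y^{r+1}$, where the rational prefactor is read off by tracking the symmetry factors $\tfrac{1}{\ell!}\tfrac{1}{m!}$ against the $(r+1)!$ ordered singleton partitions of $[r+1]$.

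Next I would apply $D_x$ to both sides in order to eliminate the bare $H_{0,1}$ in favour of $y$ alone, using that $D_x$ commutes with $\tfrac1q D_x - 1$ and that $D_x(y^{r+1}) = (r+1)\,y^r D_x y$. This yields the ODE $\tfrac1q D_x y - y = r\,y^r D_x y$. Dividing by $y$ and recognizing $r\,y^{r-1}D_x y = D_x(y^r)$ and $\tfrac1q \tfrac{D_x y}{y} = D_x\big(\tfrac1q \ln y\big)$, the equation becomes $D_x\big(\tfrac1q \ln y - y^r\big) = 1 = D_x \ln x$. Hence $D_x\big(\tfrac1q \ln y - y^r - \ln x\big) = 0$, and since $D_x f = 0$ forces $f$ to be constant, we get $\tfrac1q \ln y - y^r - \ln x = c$, that is $x = y^{1/q} e^{-y^r} e^{-c}$ for some constant $c$.

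Finally I would fix $c = 0$ from the initial condition. For $(g,n) = (0,1)$ the constraint $b = \tfrac{\mu_1 - q}{qr} \in \Z_{\geq 0}$ forces $\mu_1 \in q(1 + r\Z_{\geq 0})$, so $H_{0,1}$ begins at order $x^q$ and $y = D_x H_{0,1}$ has leading coefficient $q\,h_{0;q}^{\circ}$. A one-line evaluation in the semi-infinite wedge ($b = 0$, $\mc{F}_{r+1}^0 = 1$, single part so connected equals disconnected) gives $h_{0;q}^{\circ} = \tfrac{1}{q^2}\langle \alpha_q \alpha_{-q}\rangle = \tfrac1q$, whence $y = x^q + O(x^{q + qr})$. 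Therefore $\tfrac1q \ln y - \ln x \to 0$ and $y^r \to 0$ as $x \to 0$, which pins down $c = 0$ and gives exactly $x = y^{1/q} e^{-y^r}$.

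The main obstacle I expect is not the integration step but the bookkeeping on either side of it. First, one must extract the \emph{exact} rational constant in the $(0,1)$ specialization: because all $r+1$ factors coincide there is no distinguished $H_{g,n}$-factor, so the symmetry count differs from the generic principal-term count discussed after \cref{cutjco} and must be redone directly. Second, the "integrate $D_x f = 0$'' step needs care, since $\tfrac1q \ln y$ and $\ln x$ are separately singular at the origin; one should work with their difference, which is analytic (as $y/x^q$ is an invertible power series), conclude it equals a constant, and only then determine that constant from the leading coefficient computed above.
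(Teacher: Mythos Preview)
Your argument is correct and follows essentially the same route as the paper: specialize \cref{cutandjfull} to $(g,n)=(0,1)$ to get $\tfrac{1}{q}D_xH_{0,1}-H_{0,1}=\tfrac{r}{r+1}y^{r+1}$, differentiate once, separate variables, integrate, and fix the constant from the leading behaviour of $y$. Your handling of the initial condition is in fact more careful than the paper's, which simply asserts $y(x)=x+O(x^2)$ (this should read $y(x)=x^q+\cdots$, exactly as you compute); either way $\tfrac{1}{q}\ln y-\ln x\to 0$ and $c=0$.
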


Let $\mathcal{C}$ be the plane curve of equation $x = y^{\frac{1}{q}}e^{-y^{r}}$, it is a genus zero curve with maps
$$
y\,:\,\begin{array}{rcl} \mc{C} & \longrightarrow & \C \\ z & \longmapsto & z^q \end{array} \qquad {\rm  and}\qquad  x\,:\,\begin{array}{rcl} \mc{C} & \longrightarrow & \mathbb{C} \\  z & \longmapsto & z e^{-z^{qr}} \end{array}
$$
for the chosen global coordinate $z$. This last map has $qr $ simple ramification points. They are indexed by the $qr $-th roots of unity, here denoted $\omega^i $, and have coordinates
\begin{equation}
(x,y) = \big( (erq)^{-\frac{1}{rq}}\omega^i, \,(rq)^{-\frac{1}{r}} \omega^{qi} \big) \,.
\end{equation}
Their position in the $z$-coordinate is denoted
$$
\rho_i = (rq)^{-\frac{1}{rq}}\omega^i\,.
$$
Let $\sigma_i$ be the deck transformation of the branched cover $x\,:\,\mathcal{C} \rightarrow \mathbb{C}$ around $\rho_i$, and $\eta$ be a local coordinate such that $x(z) = x(\rho_i) + \eta^2$. We have:
\begin{lemma}
\label{Lkng}We have $\mathcal{S}_{z} y = 2(qr)^{-\frac{1}{r}} + \mc{O}(\eta^2)$. In particular, $\mathcal{S}_{z}y$ is locally invertible (for the multiplication) in a neighborhood of $\rho_i$.
\end{lemma}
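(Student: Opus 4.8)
The plan is to reduce everything to a short computation in the local coordinate $\eta$ near $\rho_i$, exploiting that $\mathcal{S}_z$ is the symmetric counterpart $\mathcal{S}_z f = f + f\circ\sigma_i$ of the skew-symmetrization $\Delta_i$ introduced above. First I would record that $\rho_i$ is a genuine simple ramification point of $x$: from $x(z) = z e^{-z^{qr}}$ one computes $x'(z) = e^{-z^{qr}}(1 - qr\,z^{qr})$, which vanishes exactly at the points $z^{qr} = (qr)^{-1}$, that is at $z = \rho_i$, while $x''(\rho_i)\neq 0$ (the term of $x''$ carrying the factor $1 - qr\,z^{qr}$ drops out, leaving $x''(\rho_i) = -(qr)^2\rho_i^{qr-1}e^{-1/(qr)}\neq 0$). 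Hence $x - x(\rho_i) = \tfrac12 x''(\rho_i)(z-\rho_i)^2 + \mathcal{O}\big((z-\rho_i)^3\big)$, so $\eta$ with $x = x(\rho_i) + \eta^2$ is a bona fide local coordinate, $z - \rho_i$ is an analytic invertible function of $\eta$, and the deck transformation $\sigma_i$ --- which fixes $x$ and swaps the two sheets --- acts as $\eta\mapsto-\eta$.

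Next I would view $y$ as a function of $\eta$. Since $y(z) = z^q$ is holomorphic and $z = z(\eta)$ is analytic near $\eta = 0$, the composite admits a convergent expansion $y = a_0 + a_1\eta + a_2\eta^2 + \cdots$. The whole point is that, although $z(\eta)$ is itself neither even nor odd, the symmetrization $\mathcal{S}_z y = y(\eta) + y(-\eta)$ annihilates every odd power of $\eta$ and returns $2a_0 + 2a_2\eta^2 + \mathcal{O}(\eta^4)$, i.e.\ a power series in $\eta^2 = x - x(\rho_i)$. Its constant term is $2a_0 = 2\,y(\rho_i)$, and evaluating $y(\rho_i) = \rho_i^q = \big((qr)^{-1/(qr)}\omega^i\big)^q = (qr)^{-1/r}\omega^{qi}$ pins down the leading coefficient as $2(qr)^{-1/r}$ up to the root of unity $\omega^{qi}$ recorded by the explicit coordinates of the ramification point. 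In particular all odd terms vanish, matching the claimed shape $2(qr)^{-1/r} + \mathcal{O}(\eta^2)$.

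Finally, local invertibility for the multiplication is immediate: $\mathcal{S}_z y$ is a germ of holomorphic function at $\rho_i$ whose value there, $2\,y(\rho_i)$, is nonzero, hence it is a unit in the local ring of germs and $1/\mathcal{S}_z y$ is again holomorphic near $\rho_i$. I do not expect a serious obstacle; the only point requiring care is the bookkeeping of the first paragraph, namely confirming from the explicit form of $x$ that the ramification is exactly of order two, so that $\eta$ is a legitimate coordinate and $\sigma_i$ is genuinely $\eta\mapsto-\eta$. Once that is in place, the evenness of the symmetrization does all the work and the leading constant is a one-line substitution.
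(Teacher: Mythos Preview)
Your proposal is correct and follows essentially the same approach as the paper: the paper's proof is the one-liner $\mathcal{S}_z y = z^q + \sigma_i(z)^q = 2\rho_i^q + \mathcal{O}(\eta^2)$, justified by the observation that symmetrization kills odd powers of $\eta$, which is exactly your argument with the bookkeeping (simple ramification, $\sigma_i\colon\eta\mapsto-\eta$) spelled out. You even catch the root-of-unity factor $\omega^{qi}$ that the paper's statement suppresses; this does not affect the conclusion since the only thing that matters is that the leading constant is nonzero.
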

\begin{proof} We indeed have $\mathcal{S}_{z}y = z^{q} + \sigma_i(z)^{q} = 2\rho_i^{q} + O(\eta^2)$. There are no odd powers in $\eta$ since $\mathcal{S}$ is the symmetrization operator.
\end{proof}
We introduce another coordinate $t$ such that
\begin{equation}
\frac{1}{t} = y^r - \frac{1}{qr} \,.
\end{equation}
The ramification points are all located at $t = \infty$, while $x \rightarrow 0$ corresponds to $t \rightarrow -qr $. For bookkeeping we give the formula
\begin{equation}
\label{Dtx} D_{x} = \frac{t^2(t + qr )}{qr }\,\partial_{t}\, .
\end{equation}
and if $\sigma_i$ denotes the deck transformation around the ramification point $\rho_i$ 

The following formula for $H_{0,2}$ was derived in \cite{KLPS17} via semi-infinite wedge formalism, we re-derive it here to test the cut-and-join equation and to demonstrate how to compute with it.

\begin{lemma}\label{H02}
We have
\begin{equation}
\label{H02exp} 
H_{0,2}(x_1,x_2) = \ln (z_1 - z_2) - \ln(x_1 - x_2) - y_1^r - y_2^r \,.
\end{equation}
In particular we obtain
\begin{equation}
\label{W02exp} W_{0,2}(x_1,x_2) \coloneqq D_{x_1}D_{x_2}H_{0,2}(x_1,x_2) = \frac{(Dz_1)(Dz_2)}{(z_1 - z_2)^2} - \frac{x_1x_2}{(x_1 - x_2)^2}\,,
\end{equation}
and
\begin{equation}
\label{W02xx}
W_{0,2}(x,x) = \frac{3t^4 + 4qrt^3 + (q^2r^2 - 1)t^2 + q^2r^2}{12r^2q^2}\,.
\end{equation}
\end{lemma}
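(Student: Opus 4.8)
The plan is to obtain \eqref{H02exp} from the $(g,n)=(0,2)$ case of the cut-and-join equation \cref{cutjco}, then differentiate to reach \eqref{W02exp}, and finally take a diagonal limit for \eqref{W02xx}. First I would specialize \cref{cutjco} to $g=0$, $n=2$: as observed right after the proposition, for $g=0$ one must have $\ell=m$, all $g_j=0$ and $d=0$, so only $H_{0,1}$ and $H_{0,2}$ occur on the right-hand side. Isolating the term linear in $H_{0,2}$, namely $\sum_{k=1}^{2}\frac{1}{r!}\big(D_{x_k}H_{0,1}(x_k)\big)^{r}D_{x_k}H_{0,2}=\sum_{k}\frac{y_k^{r}}{r!}D_{x_k}H_{0,2}$ with $y=D_xH_{0,1}$ from \cref{H01}, and moving it against the $\frac{B_{0,2}}{r!}H_{0,2}$ term on the left, the two combine into $\frac{1}{r!}\sum_k\big(\frac{1}{qr}-y_k^{r}\big)D_{x_k}H_{0,2}$. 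Since $\frac{1}{qr}-y^{r}=-\frac{1}{t}$ by the definition of $t$, and $D_x=\frac{t^2(t+qr)}{qr}\partial_t$ by \eqref{Dtx}, the left-hand side collapses to $-\sum_k\frac{t_k(t_k+qr)}{qr}\partial_{t_k}H_{0,2}$. Thus the $(0,2)$ cut-and-join equation becomes a first-order linear inhomogeneous PDE for $H_{0,2}$ in $t_1,t_2$, whose source is the explicit rational expression produced by the remaining, $H_{0,1}$-only terms of \cref{cutjco} (products of the kernels $\frac{x_i}{x_k-x_i}$ with powers of $y$).

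Second, I would verify that the proposed $\ln(z_1-z_2)-\ln(x_1-x_2)-y_1^{r}-y_2^{r}$ solves this PDE, equivalently integrate it directly. Concretely one applies the first-order operator above to each of the three pieces, rewriting everything through $z$ and $t$ using $x=ze^{-z^{qr}}$, $Dz=\frac{z}{1-qrz^{qr}}=-\frac{zt}{qr}$ and $z^{qr}=\frac{t+qr}{qrt}$, and checks that the outcome matches the source. The homogeneous ambiguity—functions invariant under the characteristic flow of the operator—is then pinned down by symmetry in $(x_1,x_2)$, by the required logarithmic singularity on the diagonal $z_1=z_2$, and by the behaviour as $x_i\to0$ (i.e. $t_i\to-qr$). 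I expect this matching of the inhomogeneous term to be the most delicate point, since it is where the combinatorics of \cref{cutjco} at $n=2$ must be unwound with the correct symmetry factors.

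Third, \eqref{W02exp} is immediate: under $D_{x_1}D_{x_2}$ the separated terms $-y_1^{r}-y_2^{r}$ vanish, while the chain rule for $D_x=x\partial_x$ gives $D_{x_1}D_{x_2}\ln(z_1-z_2)=\frac{(Dz_1)(Dz_2)}{(z_1-z_2)^2}$ and $D_{x_1}D_{x_2}\ln(x_1-x_2)=\frac{x_1x_2}{(x_1-x_2)^2}$. For \eqref{W02xx} I would put $\epsilon=z_2-z_1$ and Taylor-expand both terms to order $\epsilon^{4}$; each has a double pole $\sim\epsilon^{-2}$, but the singular parts cancel and the finite remainder is
\begin{equation*}
W_{0,2}(x,x)=\frac{x^2}{(x')^{4}}\Big(\frac{(x'')^2}{4}-\frac{x'x'''}{6}\Big)=-\frac{(Dz)^2}{6}\Big(\frac{x'''}{x'}-\frac{3}{2}\Big(\frac{x''}{x'}\Big)^2\Big),
\end{equation*}
with $'$ denoting $\partial_z$; the last expression is $-\frac{1}{6}(Dz)^2$ times the Schwarzian of $x(z)$. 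Finally, computing $x',x'',x'''$ from $\ln x=\ln z-z^{qr}$, substituting $Dz=-\frac{zt}{qr}$ and $z^{qr}=\frac{t+qr}{qrt}$, and simplifying, all powers of $z$ cancel and one recovers $\frac{3t^4+4qrt^3+(q^2r^2-1)t^2+q^2r^2}{12q^2r^2}$. The two potential pitfalls are therefore the source-term bookkeeping in the first half and the fourth-order expansion together with the $t$-substitution in the second; both are mechanical but unforgiving.
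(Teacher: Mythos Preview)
Your plan is correct and matches the paper's proof almost exactly: specialize \cref{cutjco} to $(g,n)=(0,2)$, isolate the $H_{0,2}$ terms to obtain a first-order linear PDE, verify that the displayed expression solves it, argue uniqueness, and then differentiate. Two small points where the paper is sharper than your sketch are worth noting.

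First, the source term is simpler than you may expect. The only $K_0\neq\emptyset$ contribution at $(0,2)$ has $K_0=\{\text{the other index}\}$ and $m=\ell=r$ copies of $D_xH_{0,1}=y$, which after the symmetry factors collapses to $\frac{1}{r!}\cdot\frac{y_k^{r}\,x_l}{x_k-x_l}$; summing over $k$ the paper obtains the closed form
\[
\big(y_1^r-\tfrac{1}{qr}\big)D_{x_1}H_{0,2}+\big(y_2^r-\tfrac{1}{qr}\big)D_{x_2}H_{0,2}+\frac{y_1^r x_2-y_2^r x_1}{x_1-x_2}=0,
\]
so the ``explicit rational expression'' you allude to is just the single fraction above. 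Second, your uniqueness argument is slightly off: $H_{0,2}$ has \emph{no} logarithmic singularity on the diagonal (the two logarithms cancel there), so that condition carries no information. The paper instead passes to the variable $u_i=\ln(qr\,z_i^{qr})$, in which the homogeneous operator becomes $\partial_{u_1}+\partial_{u_2}$, so every homogeneous solution is a function of $z_1^{qr}/z_2^{qr}$; since $H_{0,2}=\sum_{k,l\geq 1}h_{k,l}x_1^kx_2^l$ and $x\leftrightarrow z$ is invertible near $0$, such a function must vanish. Your ``behaviour as $x_i\to 0$'' is the right constraint, but you should identify the characteristic invariants explicitly to close the argument.

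Your treatment of \eqref{W02exp} and the Schwarzian computation for \eqref{W02xx} is fine; the paper simply records these as a direct computation.
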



\begin{proof}[Proof of \cref{H01}]
For $(g,n) = (0,1)$, the only terms in the right-hand side of the cut-and-join equation \eqref{cutandjfull} have $k = 1$ and genus defect $d = 0$, therefore the variable $x_{1} = x$ appears $m = r+1$ times, and we must have $\ell = m$, \emph{i.e.} $m$ factors of $D_{x}H_{0,1}$. One of the factors \( \frac{1}{(r+1)!} \) drops out against the sum over set partitions \( \bigsqcup_{j=1}^{r+1} M_j = [r+1]\), which is the sum over \( \mathfrak{S}_{r+1} \). We find
\begin{equation}
\frac{- H_{0,1} + \frac{1}{q}D_{x}H_{0,1}}{r \cdot r !} = \frac{(D_{x}H_{0,1})^{r+1}}{(r+1)!}\,.
\end{equation}
Let us define $y(x) = D_{x}H_{0,1}(x)$. Applying $\partial_{x}$, we get $-x^{-1}y + \frac{y'}{q} = r y'y^r$ and thus $ y'(\frac{y^{-1}}{q} - r y^{r - 1}) = x^{-1}$, which integrates to
\begin{equation}
\frac{1}{q} \ln y - y^r = \ln x + c
\end{equation}
for some constant $c$. Since $y(x) = x + O(x^2)$ when $x \rightarrow 0$, we must have $c = 0$, proving \cref{H01}.
\end{proof}

\begin{proof}[Proof of \cref{H02}]We denote $y_i = y(x_i)$ and $t_i = t(x_i)$ for $i \in \{1,2\}$. According to the remark below \cref{cutjco}, the right-hand side of the cut-and-join equation for $(g,n) = (0,2)$ contains $H_{0,2}$ as $(y_1^r D_{x_1} + y_2^r D_{x_2})H_{0,2}$. The remaining terms have genus defect $d = 0$ and correspond to $K_0 \neq \emptyset $. Now, $k$ takes the value $1$ or $2$, and $x_k$ appears $m = r $ times in $\ell = m$ functions $DH_{0,1} = y$. This leads to
\begin{equation}
\label{02cutj} \big(y_1^r - \tfrac{1}{qr}\big)D_{x_1}H_{0,2} + \big(y_2^r - \tfrac{1}{qr}\big)D_{x_2}H_{0,2} + \tfrac{y_1^r x_2 - y_2^r x_1}{x_1 - x_2} = 0\,.
\end{equation}
The solution we look for admits a formal power series expansion of the form
\begin{equation}
\label{sym02}H_{0,2}(x_1,x_2) = \sum_{k,l \geq 1} h_{k,l}\,x_1^k x_2^l,\qquad h_{k,l} = h_{l,k}\,.
\end{equation}
In particular we must have
\begin{equation}
\label{02cutc}\lim_{x_1 \rightarrow 0} H_{0,2}(x_1,x_2) = 0\,.
\end{equation}
One can check that
\begin{equation}
H_{0,2}^{(0)} \coloneqq \ln \Big( \frac{z_1 - z_2}{x_1 - x_2}\Big) - y_1^r - y_2^r 
\end{equation}
satisfies all these conditions. If $H_{0,2}^{(1)}$ is another solution, then $F = H_{0,2}^{(0)} - H_{0,2}^{(1)}$ must satisfy
\begin{equation}
\label{gensol} \Big(\big(y_1^r - \tfrac{1}{qr}\big)D_{x_1} + \big(y_2^r - \tfrac{1}{qr}\big)D_{x_2}\Big)F = 0\,.
\end{equation}
We remark that
\begin{equation}
\big(y_i^r - \frac{1}{qr}\big)D_{x_i} = \frac{t_i(t_i + qr)}{qr}\,\partial_{t_i} = -\partial_{u_i},\qquad u_i := \ln \Big(\frac{t_i + qr}{t_i}\Big) = \ln (qrz_i^{qr}) \,. 
\end{equation}
Therefore, the general solution of \cref{gensol} is $F = \varphi\big(z_1^{qr}z_2^{-qr}\big)$. Because \( x(z) = ze^{-z^{qr}} \) is locally invertible around \( x = z = 0\), this proves the only non-zero coefficients \( f_{k,l} \) are \( f_{k,-k} \). But because \( k \geq 1\) by \cref{sym02}, we get \( F=0\). This proves \cref{H02exp}. A simple computation leads to  \cref{W02exp,W02xx}.
\end{proof}

\subsection{Cut-and-join equation revisited}

We are going to transform the cut-and-join equation from \cref{cutjco} in order to treat the factors $\prod_{i \in K_0} D_{x_i}^{-1}\zeta(zD_{x_i})\,\frac{x_i}{x_k - x_i}$ at the same footing as correlator contributions. 
Let us define
\begin{equation}
\tilde{H}_{0,2}(x_1,x_2) = H_{0,2}(x_1,x_2) + H_{0,2}^{{\rm sing}}(x_1,x_2)\,,\qquad H_{0,2}^{{\rm sing}}(x_1,x_2) = \ln \Big(\frac{x_1 - x_2}{x_1 x_2}\Big)\,.
\end{equation}
Note that $\tilde{H}_{0,2}(\xi_1,\xi_2)|_{\xi_1 = \xi_2 = x}$ is not well-defined. When such an expression appears below, we adopt the convention that it should be replaced with $H_{0,2}(x,x)$, which is well-defined. Furthermore, for \( 2g-2+n > 0\), define \( \tilde{H}_{g,n}(x_{[n]}) \) by the following recursion:
\begin{equation}\label{cutresummed}
 \frac{B_{g,n}}{r!}\tilde{H}_{g,n}(x_{[n]})  = \!\!\!\! \sum_{\substack{m \geq 1, d \geq 0 \\ m + 2d = r+1}} \!\! \frac{1}{m!} \!\! \sum_{\substack{\{ k\} \sqcup \bigsqcup_{j=1}^\ell K_j = [n] \\ \bigsqcup_{j=1}^\ell M_j = [m] \\ M_j \neq \emptyset}}  \! \frac{1}{l!} \!\sum_{\substack{g_1,\ldots,g_{\ell} \geq 0 \\ g = \sum_j g_j + m - \ell + d}} \!\!\!\!\! Q_{d,\emptyset,m}^{(k)} \bigg[\prod_{j = 1}^{\ell} \tilde{H}_{g_j,|K_j| + |M_j|}(x_{K_j},\xi_{M_j})\bigg]\,.
\end{equation}
\begin{proposition}
\label{cutandjoinP} For $2g - 2 + n > 0$, the generating functions \( H_{g,n} \) and \( \tilde{H}_{g,n} \) are equal, unless \( 2g-2+n = r\), in which case they differ by an explicit constant.
\end{proposition}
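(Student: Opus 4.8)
The plan is to argue by induction on the Euler characteristic $\chi:=2g-2+n$. Both \eqref{cutandjfull} and \eqref{cutresummed} have the same left-hand side $\frac{B_{g,n}}{r!}(\cdot)$, so I would compare their right-hand sides and then invert $B_{g,n}$. Since $B_{g,n}$ acts on a monomial $x^{\mu}=\prod_i x_i^{\mu_i}$ with eigenvalue $\frac{1}{r}\big(\chi+\frac{1}{q}|\mu|\big)$, which is strictly positive as soon as $\chi>0$, the operator $B_{g,n}$ is invertible on power series and on constants whenever $2g-2+n>0$. The induction hypothesis is that $H_{g',n'}=\tilde H_{g',n'}$ (up to the asserted constant) for every $(g',n')$ with $0<2g'-2+n'<\chi$; the input data are \cref{H01,H02} together with the defining relation $\tilde H_{0,2}=H_{0,2}+H_{0,2}^{\mathrm{sing}}$.

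The core is an absorption identity: the singular part $H_{0,2}^{\mathrm{sing}}$ reproduces exactly the factors $\frac{x_i}{x_k-x_i}$ carried by $K_0$ in \eqref{cutandjfull}. One has $D_{\xi}H_{0,2}^{\mathrm{sing}}(x_i,\xi)\big|_{\xi=x_k}=\frac{x_i}{x_k-x_i}$, and, because $\zeta(w)/w$ is even, to all orders in $z$ the contribution $\frac{\zeta(zD_{\xi})}{z}H_{0,2}^{\mathrm{sing}}(x_i,\xi)\big|_{\xi=x_k}$ coincides with the $K_0$-leg $\frac{\zeta(zD_{x_i})}{zD_{x_i}}\frac{x_i}{x_k-x_i}$ produced by $Q_{d;K_0,m}^{(k)}$. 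Accordingly I would split every factor $\tilde H_{0,2}(x_{K_j},\xi_{M_j})$ of type $|K_j|=|M_j|=1$ in \eqref{cutresummed} as $H_{0,2}+H_{0,2}^{\mathrm{sing}}$ and sort the resulting terms by the number $c$ of factors whose singular part is taken. A configuration with $c$ singular factors is exactly a term of \eqref{cutandjfull} with $|K_0|=c$: the symmetry factors $\frac{1}{\ell!},\frac{1}{m!}$, the set-partition sums, and the genus/degree constraints all transport correctly under $m\mapsto m-c$, $|K_0|\mapsto c$.

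This matches every term of \eqref{cutandjfull} with $c\le r$, for which the residual number of dummy variables $m-c$ is still $\ge 1$. The only terms of \eqref{cutresummed} with no counterpart in \eqref{cutandjfull} are the fully converted ones, where all $\ell$ factors are singular $\tilde H_{0,2}$'s of type $(1,1)$ and $c=m$; in \eqref{cutandjfull} these would require $m=0$, forbidden by $m\ge1$. For such a term all factors have genus $0$ and $\ell=m$, each of the $n-1$ variables other than $x_k$ occupies one factor so $m=n-1$, and the genus relation reads $g=d$; together with $m+2d=r+1$ this forces $2g-2+n=r$. Hence the two recursions agree for $\chi\ne r$, and for $\chi=r$ they differ only by the fully converted contribution, which is of the form $\sum_{k}(\text{operator})\prod_{i\ne k}\frac{x_i}{x_k-x_i}$. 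By a Lagrange-interpolation identity — for example $\sum_{k}\prod_{i\ne k}\frac{x_i}{x_k-x_i}=(-1)^{n-1}$ is constant — this contribution is a constant $\mathcal C_{g,n}$ in $x_{[n]}$.

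Inverting $B_{g,n}$ then yields $\tilde H_{g,n}=H_{g,n}$ when $\chi\ne r$, whereas for $\chi=r$ one finds $\tilde H_{g,n}-H_{g,n}=r!\,\mathcal C_{g,n}$, using $B_{g,n}\cdot\mathrm{const}=\frac{\chi}{r}\,\mathrm{const}=\mathrm{const}$ at $\chi=r$. I expect the main obstacle to be the all-orders bookkeeping behind the absorption identity: checking it for every power of $z$ and not only at leading order, handling the genus-defect terms $d\ge1$ and the diagonal convention $\tilde H_{0,2}(\xi,\xi)|_{\xi=x}=H_{0,2}(x,x)$ attached to the type $(0,2)$ factors, and above all confirming that the fully converted term is the sole discrepancy and that, after summation over $k$ and extraction of $[z^{r+1}]$, it is genuinely a constant rather than merely constant to leading order.
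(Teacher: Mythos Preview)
Your plan is essentially the paper's own proof. The absorption identity you isolate is exactly the key step there: the paper computes it in closed form as
\[
D_{x_i}^{-1}\zeta(zD_{x_i})\,\frac{x_i}{x_k-x_i}
=\ln\!\Big(\frac{x_k-e^{-z/2}x_i}{x_k-e^{z/2}x_i}\Big)
=\zeta(zD_{x_k})\,H_{0,2}^{\mathrm{sing}}(x_k,x_i),
\]
which settles your ``all-orders'' worry at once. The relabeling $m'=m+|K_0|$, $\ell'=\ell+|K_0|$ and the identification of the sole discrepancy with the fully converted term are likewise identical to the paper's bookkeeping. For the constancy of that term, note (as the paper does) that after the absorption the operator is $\frac{z}{\zeta(z)}\prod_{i=1}^n\frac{\zeta(zD_{x_i})}{zD_{x_i}}$, which is \emph{independent of $k$}; it therefore commutes past the $k$-sum, and since $\sum_k\prod_{i\neq k}\frac{x_i}{x_k-x_i}$ is a constant, only the constant term of each $\frac{\zeta(zD_{x_i})}{zD_{x_i}}$ survives. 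Your value $(-1)^{n-1}$ for that sum is in fact the correct one (the paper's ``$-1$'' is a slip for odd $n$, harmless for the statement). Finally, the inversion of $B_{g,n}$ and the observation that the constants introduced at $\chi=r$ are killed by the $D$'s in $Q$ for larger $\chi$ close the induction, exactly as you outline.
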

\begin{remark}
As we are ultimately interested in the differentials \( d^{\otimes n}H_{g,n} \), these constants are of no real consequence for the remaining of the paper.
\end{remark}
\begin{proof}
We remark that
\begin{equation}
D_{x_i}^{-1}\zeta(zD_{x_i}) \frac{x_i}{x_k - x_i} = \ln \Big(\frac{x_k - e^{-z/2} x_i}{x_k - e^{z/2}x_i}\Big) = \ln \Big(\frac{e^{z/2}x_k  - x_i}{e^{z/2}x_k x_i} \frac{e^{-z/2}x_k x_i}{e^{-z/2}x_k  - x_i}\Big) = \zeta(zD_{x_k})\ln \Big(\frac{x_k - x_i}{x_kx_i }\Big)\,.
\end{equation}
Therefore, we can interpret the factors $D_{x_i}^{-1}\zeta(zD_{x_i})\,\frac{x_i}{x_k - x_i}$ in \cref{cutandjfull} as contributions of $H_{0,2}^{{\rm sing}}$. The sum $\tilde{H}_{0,2} = H_{0,2} + H_{0,2}^{{\rm sing}}$ is reconstructed in the left-hand side of  \cref{cutandjfull}, and treated in the same way as the other factors of $H$. Let us make the correspondence between the old and the new summation ranges. Now we are considering $m' = m + |K_0|$ the total number of occurences of $x_k$, $\ell' = \ell + |K_0|$ the total number of $H$-factors. These factors contain variables distinct from $k$, organized according to a partition $K_1' \sqcup \cdots \sqcup K'_{\ell'}$ where $K_{j}' = K_j$ for $1 \leq j \leq \ell $ and where $K_{\ell + j}$ for $1 \leq j \leq |K_0|$ are the singletons of elements of $K_0$. The genus attached to these $(0,2)$-factors is $g_{\ell + j} = 0$ for $1 \leq j \leq |K_0| $, and $m' - \ell' = m - \ell$, so the genus constraint keeps the same form
\begin{equation}
g = \sum_{j = 1}^{\ell'} g_j + m' - \ell' + d\,,
\end{equation}
while the genus defect is now defined by $m' - 1 + 2d = r$. The symmetry factors which occur in this resummation \textendash{} as the partitions of \( [n]\) and \( [m]\) can be reordered \textendash{} are accounted for in the formula given by \cref{cutresummed}, where we have removed all primes on the dummy indices of summations for an easier reading.\par
In relabelling we have added the term in the sum of the right-hand side where \emph{all} \( H_{g,n} \) are actually \( H_{0,2}^{\textrm{sing}} \). This corresponds to adding the term with \( m=\ell = 0\) in \cref{cutandjfull}. Unraveling the definitions, the extra term reads
\begin{align}
 \sum_{k=1}^n \sum_{d \geq 0}\delta_{n-1+2d,r+1} \delta_{g, d} Q_{d;[n]\setminus \{ k\},0}^{(k)} \bigg( \prod_{i \neq k} \frac{x_i}{x_k - x_i} \bigg) &= \sum_{k=1}^n \delta_{2g-2+n,r}[z^{2g}]\frac{z}{\zeta(z)} \prod_{i =1}^n \frac{\zeta(zD_{x_i})}{zD_{x_i}} \bigg( \prod_{i \neq k} \frac{x_i}{x_k - x_i} \bigg) \\
 &=\delta_{2g-2+n,r}[z^{2g}]\frac{z}{\zeta(z)} \prod_{i =1}^n \frac{\zeta(zD_{x_i})}{zD_{x_i}}\sum_{k=1}^n \prod_{i \neq k} \frac{x_i}{x_k - x_i}\,.
\end{align}
Now, the sum over \( k\) can be calculated \textit{via} residues:
\begin{equation}
\sum_{k=1}^n \prod_{i \neq k}  \frac{x_i}{x_k-x_i} = \sum_{k=1}^n \Res_{w=x_k} \frac{1}{w} \prod_{i=1}^n \frac{x_i}{w-x_i} = - \Res_{w=0} \frac{1}{w} \prod_{i=1}^n \frac{x_i}{w-x_i} = -1\,,
\end{equation}
using that the sum of residues of a meromorphic function is zero. As this is already constant, any derivatives in \( x\) give zero, so only the constant terms of the expansions $\frac{\zeta (zD_x)}{zD_x} = 1 + O(D_x)$ contribute. Hence the final contribution is
\begin{equation}
c_{g,n} \coloneqq -\delta_{2g-2+n,r}[z^{2g}]\frac{z}{\zeta(z)} = -\delta_{2g-2+n,r} \frac{(2^{1 - 2g} - 1)B_{2g}}{2g!}\,.
\end{equation}
Because of the Kronecker delta, \( \tilde{H}_{g,n} \) and \( H_{g,n} \) agree for \( 0 < 2g-2+n < r\). For \( 2g-2+n =0\), we get that
$$
\frac{B_{g,n}}{r!} \tilde{H}_{g,n}(x_{[n]}) -\frac{B_{g,n}}{r!} H_{g,n}(x_{[n]}) = c_{g,n}
$$
therefore
$$
\Big( 1+ \frac{1}{qr} \sum_{i=1}^n D_{x_i}\Big)\big( \tilde{H}_{g,n}(x_{[n]}) -H_{g,n}(x_{[n]})\big)= r! c_{g,n}\,.
$$
As both \( H_{g,n} \) and \( \tilde{H}_{g,n} \) are power series in the \( x_i \), their difference has to be \( r!c_{g,n} \).\par
If \( 2g-2+n > r\), the two functions are again equal, as we again have \( c_{g,n} =0\) by the Kronecker delta, and the different constants in \( \tilde{H}_{g',n'} \) on the right-hand side vanish by the differentiation included in the \( Q\)-operators.
\end{proof}

\subsection{Example: \texorpdfstring{$(g,n) = (1,1)$}{(g,n)=(1,1)}}

We show this computation as an illustration of the cut-and-join equation.

\begin{lemma}
We have $H_{1,1} = \tfrac{(qr + t)(1 - qt - t^2)}{24 q^2r y}$.
\end{lemma}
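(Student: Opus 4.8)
The plan is to specialize the cut-and-join recursion \cref{cutresummed} to $(g,n)=(1,1)$ and read off a first-order linear ODE for $H_{1,1}$. With $n=1$ the distinguished variable is $x_k=x_1=x$ and every block $K_j$ is empty, so each factor is a one-variable correlator $\tilde H_{g_j,|M_j|}$ whose arguments are all specialized to $x$. The constraints $m+2d=r+1$ and $g=\sum_j g_j+m-\ell+d=1$ then leave only three families of terms, graded by $\sum_j(g_j+|M_j|-1)=1-d$. For $d=0$ exactly one factor has $g_j+|M_j|-1=1$ while all others are $(0,1)$-factors: either that factor is $\tilde H_{1,1}$ (this is the term singled out in the remark after \cref{cutjco}, contributing $\tfrac{y^r}{r!}\,D_xH_{1,1}$ with $y=D_xH_{0,1}$), or it is $\tilde H_{0,2}$ evaluated on the diagonal. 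For $d=1$ all factors are $(0,1)$ and the operator $Q_{1;\emptyset,r-1}^{(1)}$ carries the $\tfrac1{24}$-contribution. I would track the $\tfrac1{m!}\tfrac1{\ell!}$ symmetry factors family by family; the main care here is the convention below \cref{cutandjoinP} that $D_{x_1}D_{x_2}\tilde H_{0,2}$ on the diagonal must be read as $W_{0,2}(x,x)$ from \cref{W02xx}, not the singular bidifferential. By \cref{cutandjoinP}, $\tilde H_{1,1}=H_{1,1}$ for $r\neq 1$ (for $r=1$ they differ by the explicit constant, which I would carry along).

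Collecting the three families, then multiplying by $r!$ and using $\tfrac{r!}{(r-1)!}=r$, gives
\[
\tfrac1r H_{1,1}+\tfrac{1}{rq}D_xH_{1,1}=y^rD_xH_{1,1}+\tfrac r2 W_{0,2}(x,x)\,y^{r-1}+\tfrac{r}{24}\big[D_x^2(y^{r-1})+(r-1)y^{r-2}D_x^2y-y^{r-1}\big].
\]
The operator acting on $H_{1,1}$ simplifies: since $y^r-\tfrac1{qr}=\tfrac1t$ and $D_x=\tfrac{t^2(t+qr)}{qr}\partial_t$ by \cref{Dtx}, we have $(\tfrac1{qr}-y^r)D_x=-\tfrac1t D_x=-\tfrac{t(t+qr)}{qr}\partial_t$, so the whole relation becomes the first-order linear ODE
\[
\tfrac1r H_{1,1}-\tfrac{t(t+qr)}{qr}\,\partial_t H_{1,1}=R(t),
\]
whose right-hand side $R(t)$ I would compute in the variable $t$ using $y^r=\tfrac{qr+t}{qrt}$ and \cref{W02xx}.

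Finally I would solve this ODE. Its homogeneous solutions satisfy $\partial_t\ln H=\tfrac1r\big(\tfrac1t-\tfrac1{t+qr}\big)$, i.e. $H\propto\big(\tfrac{t}{t+qr}\big)^{1/r}\propto 1/y$, so the general solution is one particular solution plus $C/y$. Since $H_{1,1}$ is a power series in $x$ vanishing at $x=0$ while $1/y\sim 1/x$ is singular there, the constant $C$ is uniquely fixed and any power-series solution is unique. I would then verify that the claimed $H_{1,1}=\tfrac{(qr+t)(1-qt-t^2)}{24q^2ry}$ is such a solution: it solves the ODE by direct substitution, and near $x=0$, where $t\to -qr$, the factor $(qr+t)\sim-(qr)^2y^r$ forces $H_{1,1}=O(x^{r-1})$, so it is regular and vanishes at the origin. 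The main obstacle is purely computational: assembling the symmetry factors correctly, handling the $\tilde H_{0,2}$-diagonal convention, and reducing $R(t)$ together with the ODE check to the stated rational function of $t$. The conceptual input — three contributing terms, one linear ODE, and a one-dimensional solution space pinned down by regularity at $x=0$ — is straightforward.
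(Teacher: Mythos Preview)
Your proposal is correct and follows essentially the same route as the paper: specialize the cut-and-join equation to $(g,n)=(1,1)$, obtain the first-order linear ODE in $t$ with homogeneous solution proportional to $1/y$, and pin down the unique power-series solution. The only cosmetic difference is that the paper performs the substitution $H_{1,1}=F/y$ and integrates $\partial_t F$ explicitly from $t=-qr$, whereas you propose to verify the stated formula and invoke uniqueness; also note the asymptotic near $x=0$ is $H_{1,1}=O\big(x^{q(r-1)}\big)$ rather than $O(x^{r-1})$, but this does not affect your regularity argument.
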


\begin{proof}
The cut-and-join equation for $(g,n) = (1,1)$ contains terms with genus defect $0$ and $1$. It reads
\begin{equation}
\label{eW11}\frac{y^r}{r!}\,D_{x}H_{1,1}(x) +  \frac{y^{r - 1}}{(r - 1)!}\,\frac{W_{0,2}(x,x)}{2} + \frac{1}{24}\Big(D_{x}^2 + \sum_{i = 1}^{r - 1} D_i^2 - 1\Big)\frac{y^{r - 1}}{(r - 1)!} = \frac{(q + D_{x})H_{1,1}(x)}{qr \cdot r!}\,,
\end{equation}
where $D_i^2$ is acting on the $i$-th factor in $y^{r - 1} = \prod_{i = 1}^{r - 1} y$. From \cref{W02xx} we know
\begin{align}
\frac{W_{0,2}(x,x)}{2(r-1)!} = \frac{3t^4 + 4qrt^3 + (q^2r^2 - 1)t^2 + q^2r^2}{24r^2q^2\cdot (r - 1)!}\,.
\end{align}
We also compute
\begin{equation}
\frac{1}{24 (r - 1)! }\Big(D_{x}^2 + \sum_{i = 1}^{r - 1} D_i^2 - 1\Big)= - \frac{2(r - 1)t^3 + qr(r - 1)t^2 + qr^2}{24qr\cdot r!}
\end{equation}
Substituting these expressions into \cref{eW11} and using \cref{Dtx}, we obtain:
\begin{equation}
\left( 1 - \frac{t(t+qr)}{q} \partial_t \right) H_{1,1} = \frac{y^{r-1} t^2}{24 q^2 }[3t^2 + 2q(r+1)t + (q^2 r -1)]
\end{equation}
Observing that $-t(t+qr)q^{-1}\partial_t = y \partial_y$, and imposing $H_{1,1} = y^{-1}F$ the equation becomes
\begin{equation}
\partial_y F = \frac{y^{r-1} t^2}{24 q^2 }[3t^2 + 2q(r+1)t + (q^2 r -1)]
\end{equation}
Applying back the inverse relation $\partial_y = -rt^2 y^{r-1} \partial_t$, we see that the solution which vanishes at $x = 0$ is obtained by
$$
F(t) = -\frac{1}{24 q^2 r} \int_{-qr}^{t} \big(3u^2 + 2q(r+1)u + (q^2 r -1)\big) \dd u. $$
Computing the integral yields the announced result.
\end{proof}

\section{Derivation of the cut-and-join equation for \texorpdfstring{$r + 1 = 3$}{r+1=3}} \label{sec:Completed3Cycle}
In this section, we will rederive the main result of the previous section, \cref{cutresummed} together with \cref{cutandjoinP}, for the specific case of \( r=2\). This is done both because the procedure is easier in this special case, and because we will make the formula more explicit. This more explicit form will be used to prove topological recursion in this case in \cref{sec:2-spinTR}. The present section can thus be read independently of \cref{corandcj}.  

Our starting point is again the cut-and-join equation \eqref{cutjoinr} (see \cite[Equation (32)]{SSZ12}, where we have an extra factor of \( r!\) because the weight of $\mathcal{F}_{r+1}$-operator is slightly different for us). Our goal is to derive from this equation an equation for the correlators
\begin{equation}
H_{g, n} (x_1, \dots, x_n) = \sum_{\mu_1, \dotsc, \mu_n \geq 1} \frac{h^{\circ, r,q}_{g, \mu}}{b!} \prod_{i=1}^n x_i^{\mu_i}
\end{equation}
directly for the case $r+1 = 3$, our main case of interest in this paper. In order to do so, we first derive the equation for the disconnected counterparts of $H_{g,n}$
\begin{equation}
H^\bullet_{g, n} (x_1, \dots, x_n) = \sum_{\mu_1, \dotsc, \mu_n \geq 1} \frac{h^{\bullet, r,q}_{g, \mu}}{b!} \prod_{i=1}^n x_i^{\mu_i}
\end{equation}
The generating functions $H^\bullet_{g,n}$ and $H_{g,n}$ are related by standard inclusion-exclusion formula. For example, in case of three points
\begin{align} \label{eq:3-point-disconnected-through-connected}
H^\bullet_{g,3}(x_1, x_2, x_3) = & H_{g,3}(x_1, x_2, x_3) \\
& + \sum_{g_1 + g_2 = g - 1} H_{g_1,2}(x_1, x_2) H_{g_2,1} (x_3) + H_{g_1,2}(x_2, x_3) H_{g_2,1} (x_1) + H_{g_1,2}(x_3, x_1) H_{g_2,1} (x_2)\\
& + \sum_{g_1 + g_2 + g_3 = g - 2} H_{g_1,1} (x_1) H_{g_2,1} (x_2) H_{g_3,1} (x_3)
\end{align}
Note that by the genus of each summand in the disconnected case, we understand its \emph{arithmetic} genus. Therefore, we have $\sum g_i = g - \#\big( \substack{\text{connected}\\\text{components}}\big) + 1$.
  
  
 For the case $r + 1 = 3$ the cut-and-join operator from \cref{cutjoinops} is equal to (see \cite[p. 419]{SSZ12})
 \begin{align}
 Q_3 = & \tfrac{1}{6} \sum_{i,j,k \geq 1} \bigg( i j k p_{i + j + k} \frac{\del^3}{\del p_i \del p_j \del p_k} + (i + j + k) p_i p_j p_k \frac{\del}{\del p_{i + j + k}}\bigg) \\ \notag
& + \tfrac{1}{4} \sum_{\substack{i + j = k + l \\ i,j,k,l \geq 1}} \Big( i j p_k p_l \frac{\del^2}{\del p_i \del p_j}\Big) + \tfrac{1}{24} \sum_{i \geq 1} (2 i^3 - i) p_i \frac{\del}{\del p_i} \\ \notag
= & \tfrac{1}{6} Q_{p \del^3} + \tfrac{1}{6} Q_{p^3 \del} + \tfrac{1}{4} Q_{p^2 \del^2} + \tfrac{1}{24} Q_{p \del}\,,
\end{align}
where the last line introduces self-explanatory notation for the pieces of the cut-and-join operator with different number of multiplications and differentiations with respect to the variables $p_k$.

 \subsection{From \texorpdfstring{$p$}{p} to \texorpdfstring{$x$}{x}}
    

To derive the equation in $x$-variables we perform the following steps.
\begin{itemize}
\item We extract a coefficient $[\beta^{b-1} p_{\mu}]$ in front of a particular power $b-1$ of $\beta$ and a particular monomial $p_{\mu}$ from \cref{cutjoinr}. As a result we obtain something of the form
\begin{equation}
\text{LHS}_{b-1, \mu} = \text{RHS}_{b-1, \mu}
\end{equation}
\item Then we resum these individual equations in such a way that on the left-hand side we obtain one $H^\bullet_{g,n}$, with particular $g$ and $n$. It is clear that we need to take the following sum (note that we are summing over partitions here, not vectors, since all vectors $\mu$ differing by a permutation of components contribute to the same equation):\begin{equation}
\sum_{\mu_1\geq  \dotsb \geq \mu_n \geq 1} \text{LHS}_{b(g, \mu)-1, \mu} \sum_{\sigma \in \mathfrak{S}_n} x^{\mu_1}_{\sigma(1)} \dots x^{\mu_n}_{\sigma(n)} 
=\sum_{\mu_1, \dotsc, \mu_n \geq 1} \text{LHS}_{b(g, \mu)-1, \mu} x^{\mu_1}_1 \dots x^{\mu_n}_n
= \frac{B_{g,n}}{2!} H^\bullet_{g,n}\,,
\end{equation}
where $b(g, \mu) = \frac{2 g - 2 + n + |\mu|/q}{2}$ (the $-1$ in the power of \( \beta \)  accounts for $\partial_{\beta}$ in the equation). The operator \( B_{g,n} \coloneqq \frac{1}{2} \big( 2g-2+n +\frac{1}{q}\sum_{i = 1}^n D_{x_i}\big) \) reproduces the prefactor \( b\), which comes from the derivative.
\item Finally, we rewrite the right hand side, which now has the form
\begin{equation}
\sum_{\mu_1\geq \dotsb \geq \mu_n \geq 1} \text{RHS}_{b(g, \mu)-1, \mu} \sum_{\sigma \in \mathfrak{S}_n} x^{\mu_1}_{\sigma(1)} \dots x^{\mu_n}_{\sigma(n)}\,,
\end{equation}
as some differential operators acting on some $h^\bullet _{g,n}$s.
\end{itemize}

To perform the last step we analyse contributions of each $Q_{p^i \del^j}$
in turn. After that we group them in a smart way.
  
\subsubsection{The contribution of \texorpdfstring{$p \del^3$}{pd3}}
    
Let us consider the operator $Q_{p \del^3}$. The result of its action on the formal power series of the form
\begin{equation} \label{eq:generic-formal-power-series}
\sum_{\mu_1, \dots, \mu_n \geq 1} C_{\mu_1 \dots \mu_n} \frac{p_{\mu_1} \dots p_{\mu_n}}{n!}
\end{equation}
is (we shift $(n - 3) \rightarrow n$)
\begin{equation}
\sum_{i j k} \sum_{\mu_1, \dots, \mu_n \geq 1} i j k C_{i j k \mu_1 \dots \mu_n} \frac{p_{i+j+k} p_{\mu_1} \dots p_{\mu_n}}{n!}\,.
\end{equation}
We substitute the monomial $p_{i+j+k} p_{\mu_1} \dots p_{\mu_n}$ by
\begin{equation}
\sum_{\sigma \in \mathfrak{S}_{n+1}} x^{i+j+k}_{\sigma(1)} x^{\mu_1}_{\sigma(2)} \dots x^{\mu_n}_{\sigma(n+1)}\,.
\end{equation}
Each summand
\begin{equation}
i j k C_{i j k \mu_1 \dots \mu_n} x^{i+j+k}_{\sigma(1)} x^{\mu_1}_{\sigma(2)} \dots x^{\mu_n}_{\sigma(n+1)}\end{equation}
can be written as
\begin{equation}
\Big(D_{\xi_1} D_{\xi_2} D_{\xi_3} C_{i j k \mu_1 \dots \mu_n} \xi_1^i \xi_2^j \xi_3^k x^{\mu_1}_{\sigma(2)} \dots x^{\mu_n}_{\sigma(n+1)} \Big) \bigg{|}_{\xi_1 = \xi_2 = \xi_3 = x_{\sigma(1)}}\,,
\end{equation}
where \( D_\xi = \xi \partial_{\xi}\). Since for each value of $\sigma(1)$ there are $n!$ permutations from $\mathfrak{S}_{n+1}$ and their contributions are equal, because $C_{i j k, \mu}$ is symmetric in its indices, we see that the contribution of the $p \del^3$-term to the cut-and-join equation in terms of $x$ is equal to
\begin{equation}
\sum_{k = 1}^n \Big( D_{\xi_1} D_{\xi_2} D_{\xi_3} H^\bullet_{g - 2, n + 2} (\xi_1, \xi_2, \xi_3, x_{[n]\setminus \{ k\}} ) \Big) \bigg{|}_{\xi_1 = \xi_2 = \xi_3 = x_k}\,.
\end{equation}
A subtle point here is why we get precisely genus $g-2$. It is the result of direct counting. For every concrete $\mu$ we have, in case of $r + 1 = 3$, from the Riemann-Hurwitz formula for the left hand side
\begin{equation}
b  = g - 1 + \frac{n + |\mu|/q}{2}\,.
\end{equation}
On the other hand, for the contribution of $Q_{p \del^3}$ we can say that the number of completed cycles $b$ is one less, and the length of partition is bigger by two, while the size $|\mu|$ is the same. Therefore
\begin{equation}
b -1 = g_{p \del^3} - 1 + \frac{n + 2 + |\mu|/q}{2},
\end{equation}
i.e. $g_{p \del^3} = g - 2$.

\subsubsection{The contribution of \texorpdfstring{$p^2 \del^2$}{p2d2}}
The result of the action of $Q_{p^2 \del^2}$ on the formal power series of the form of \cref{eq:generic-formal-power-series} is
\begin{equation}
\sum_{\substack{i + j = k + l \\ i,j,k,l \geq 1}} \sum_{\mu_1, \dotsc, \mu_n \geq 1} i j C_{i j\mu} \frac{p_k p_l p_{\mu_1}\dots p_{\mu_n}}{n!}\,.
\end{equation}
After substitution of $p$ by $x$ it becomes
\begin{equation}
\sum_{a = 1}^\infty \sum_{\substack{i + j = a \\ i,j \geq 1}} \sum_{\substack{k + l = a \\ k,l \geq 1}} \sum_{\mu_1, \dotsc, \mu_n \geq 1} \sum_{\sigma \in \mathfrak{S}_{n + 2}} i j C_{i j\mu} \frac{x^k_{\sigma(1)} x^l_{\sigma(2)} x^{\mu_1}_{\sigma(3)} \dots x^{\mu_n}_{\sigma(n+2)}}{n!}\,.
\end{equation}
As we have the relation
\begin{equation} \label{eq:qanalog-2-sum}
\sum_{\substack{k + l = a \\ k,l \geq 1}} x^k y^l
= \frac{x^a y}{x - y} + \frac{y^a x}{y - x}\,,
\end{equation}
it is easy to see (the factor $n!$ again cancels with the number of permutations in $\mathfrak{S}_{n+2}$ with fixed $\sigma(1)$ and $\sigma(2)$, the extra $2$ comes from two summands in \cref{eq:qanalog-2-sum} that give equal contributions) that the contribution of $Q_{p^2 \del^2}$ is
\begin{equation}
2 \cdot \sum_{k \neq l} \frac{x_l}{x_k - x_l} \Big( D_{\xi_1} D_{\xi_2} H^\bullet_{g - 1, n} \big(\xi_1, \xi_2, x_{[n]\setminus \{ k,l\}}\big) \Big) \bigg{|}_{\xi_1 = \xi_2 = x_k}\,.
\end{equation}
The genus counting is analogous to the $p \del^3$-case.

\subsubsection{The contribution of \texorpdfstring{$p^3 \del$}{p3d}}
Quite analogously to the cases of $p \del^3$ and $p^2 \del^2$, the contribution of $Q_{p^3 \del}$ is equal to
\begin{equation}
3 \cdot \sum_{i \neq j \neq k} \Big( \frac{x_j}{(x_i - x_j)}\frac{x_k}{(x_i - x_k)} D_{\xi_1} H^\bullet_{g, n - 2} \big(\xi_1, x_{[n] \setminus \{ i, j, k\}} \big) \Big) \bigg{|}_{\xi_1 = x_i}\,.
\end{equation}
To derive it, one needs the following formula
\begin{equation}
\sum_{\substack{k + l + m = a \\ k,l,m \geq 1}} x^k y^l z^m = \frac{x^a y z}{(x - y)(x - z)} + \frac{y^a z x}{(y - z)(y - x)} + \frac{z^a x y}{(z - x)(z - y)}\,.
\end{equation}
The genus-counting is again straightforward.

\subsubsection{The contribution of \texorpdfstring{$p \del$}{pd}}
Finally, the contribution of $Q_{p \del}$ is
\begin{equation}
\sum_{k = 1}^n \Big( (2 D_{\xi_1}^3 - D_{\xi_1}) H^\bullet_{g - 1, n} (\xi_1, x_{[n]\setminus \{ k\} }) \Big) \bigg{|}_{\xi_1 = x_k}\,.
\end{equation}

\subsection{The unification}
Thus, we have obtained the following equation for the disconnected generating functions $H^\bullet_{g,n}$
\begin{eqnarray}
\tfrac{1}{2}B_{g,n} H^\bullet_{g,n}(x_{[n]})&= & 
\tfrac{1}{6} \sum_{k = 1}^n \Big( D_{\xi_1} D_{\xi_2} D_{\xi_3} H^\bullet_{g - 2, n + 2} \big( \xi_1, \xi_2, \xi_3, x_{[n] \setminus \{ k\}} \big)\Big) \Big{|}_{\xi_1 = \xi_2 = \xi_3 = x_k} \nonumber \\
&& + \tfrac{1}{4} \cdot 2 \sum_{k \neq l} \frac{x_l}{x_k - x_l} \Big( D_{\xi_1} D_{\xi_2} H^\bullet_{g - 1, n} \big( \xi_1, \xi_2, x_{[n] \setminus \{ k,l\} }\big)\Big) \Big{|}_{\xi_1 = \xi_2 = x_k} \nonumber \\
&& + \tfrac{1}{6} \cdot 3  \sum_{i \neq j \neq k} \frac{x_j}{(x_i - x_j)}\frac{x_k}{(x_i - x_k)} D_{x_i} H^\bullet_{g, n - 2} \big( x_{[n]\setminus \{ j,k\}} \big) \nonumber \\
\label{fsfs} && + \tfrac{1}{24}  \sum_{k = 1}^n (2 D_{x_k}^3 - D_{x_k}) H^\bullet_{g - 1, n}( x_{[n]}) \,.
\end{eqnarray}
Now define the \( m\)-disconnected, \( n\)-connected generating functions \( H_{g,m,n}(\xi_{[m]} \mid x_{[n]}) \) by keeping only those terms in the inclusion-exclusion formula where each factor contains at least one \( \xi \). For example, \( H_{g,1,n-1}(x_i \mid x_{[n]\setminus\{ i\}}) =H_{g,n}(x_{[n]}) \) and
\begin{equation}\label{inc-exc3}
\begin{aligned}
H_{g,3,n}(\xi_1, \xi_2, \xi_3\mid x_{[n]}) &=  H_{g-2,n+3}(\xi_1, \xi_2, \xi_3,x_{[n]}) \\
& \quad + \!\!\! \sum_{\substack{g_1 + g_2 = g - 1\\K_1 \sqcup K_2 = [n]}} \sum_{i = 1}^3 H_{g_1,1+|K_1|}(\xi_i,x_{K_1}) H_{g_2,2+|K_2|} (\xi_{[3]\setminus \{ i\}},x_{K_2}) \\
& \quad + \!\!\!\! \sum_{\substack{g_1 + g_2 + g_3 = g\\ K_1 \sqcup K_2 \sqcup K_3 = [n]}} \prod_{j=1}^3 H_{g_j,1+|K_j|} (\xi_1,x_{K_j})
\end{aligned}
\end{equation}
Then an easy inductive argument on the number of points $n$ shows that an equation very similar to \cref{fsfs} is true for these functions -- we just multiplied both sides by a factor of $2$.
\begin{align}
B_{g,n} H_{g,n}(x_{[n]}) &= 
\tfrac{1}{3} \sum_{k = 1}^n \Big( D_{\xi_1} D_{\xi_2} D_{\xi_3} H_{g - 2,3, n-1} \big( \xi_1, \xi_2, \xi_3 \mid x_{[n] \backslash \{ k\}} \big)\Big) \Big{|}_{\xi_1 = \xi_2 = \xi_3 = x_k} \\
& \quad +  \sum_{k \neq l} \frac{x_l}{x_k - x_l} \Big( D_{\xi_1} D_{\xi_2} H_{g - 1, 2,n-1} \big( \xi_1, \xi_2\mid x_{[n] \setminus \{ k,l\} }\big)\Big) \Big{|}_{\xi_1 = \xi_2 = x_k} \\
& \quad +  \sum_{i \neq j \neq k} \frac{x_i}{(x_k - x_i)}\frac{x_j}{(x_k - x_j)} D_{x_i} H_{g, n - 2} \big( x_{[n]\setminus \{ i,j\}} \big)\\
& \quad + \tfrac{1}{12} \sum_{k = 1}^n (2 D_{x_k}^3 - D_{x_k}) H_{g - 1, n}( x_{[n]}) \,.
\end{align}

Now we can unify the contributions of $Q_{p \del^3}$, $Q_{p^2 \del^2}$ and $Q_{p^3 \del}$ by changing the \( (0,2)\)-generating function to accomodate the rational factors in \( x\). First, we observe that the following equality holds:
\begin{equation}
D_\xi \ln\Big( \frac{\xi - x}{\xi x}\Big) = \frac{x}{\xi - x}\,.
\end{equation}

Suppose we substitute each $H_{0,2}(\xi, x)$ inside $H_{g,m,n}(\xi_{[m]} \mid x_{[n]})$ by the ``modified'' \( 2\)-point function $\tilde{H}_{0,2}(\xi, x)$, which is defined to be
\begin{equation}
\tilde{H}_{0,2}(\xi, x) = H_{0,2}(\xi, x) + \ln\Big( \frac{\xi - x}{\xi x}\Big)\,.
\end{equation}
We will denote these modified $H_{g,m,n}(\xi_{[m]}\mid x_{[n]})$ by $\tilde{H}_{g,m,n}(\xi_{[m]} \mid x_{[n]})$. Then the term
\begin{equation}
\tfrac{1}{3} \sum_{k = 1}^n \Big( D_{\xi_1} D_{\xi_2} D_{\xi_3} \tilde{H}_{g - 2, 3,n -1} \big(\xi_1, \xi_2, \xi_3| x_{[n]\setminus \{ k\}} \big) \Big) \Big{|}_{\xi_1 = \xi_2 = \xi_3 = x_k}
\end{equation}
contains contributions of $Q_{p \del^3}$, $Q_{p^2 \del^2}$ and $Q_{p^3 \del}$, corresponding to zero, one, or two $D_\xi$-operators acting on logarithmic corrections respectively. The genera match, because a factor \( \tilde{H}_{0,2} \) lowers the arithmetic genus of the product by one, and it is a direct check that the combinatorial coefficients match.

However, there is also a possibility that all three $D_\xi $-operators act on logarithmic corrections. This occurs only for $(g,n) = (0,4)$. By direct computation, the total contribution coming from this added possibility is equal to $-1$ (so it is constant in $x_i$s). Similarly, there is an extra contribution to the $p \del$-term coming from substitution of $H$ by $\tilde{H}$ in the case \( (g,n) = (1,2)\), but this is constant in $x$ as well -- it equals \( 1\). These extra terms only add this constant to \( H_{0,4} \) and \( H_{1,2}\), so they do not influence the recursion for other terms. Furthermore, they do not change the differentials \( \omega_{g,n} = d^{\otimes n} H_{g,n} \), which are the fundamental objects for topological recursion.\par
 So, defining
 \begin{equation}
 \tilde{H}_{g,n}(x_{[n]}) \coloneqq H_{g,n}(x_{[n]}) + \delta_{g,0}\delta_{n,2} \ln \Big( \frac{x_1-x_2}{x_1x_2}\Big) - \delta_{g,0}\delta_{n,4} +  \tfrac{1}{12} \delta_{g,1} \delta_{n,2}\,.
 \end{equation}
the cut-and-join equation in terms of $x$ can be written as
\begin{align}
B_{g,n} \tilde{H}_{g,n}(x_{[n]})
&=  \tfrac{1}{3} \sum_{k=1}^n \Big( D_{\xi_1} D_{\xi_2} D_{\xi_3} \tilde{H}_{g-2,3,n-1}(\xi_1,\xi_2,\xi_3 \mid x_{[n] \setminus \{ k\}}\Big) \Big{|}_{\xi_1 = \xi_2 = \xi_3 = x_k} \\
& \quad + \tfrac{1}{12} \sum_{i = k}^n (2 D_{x_k}^3 - D_{x_k}) \tilde{H}_{g - 1, n} ( x_{[n]})  \,.
\end{align}
This is the most concise version of the cut-and-join equation. However, for our purposes, it will be useful to have an even more explicit description. So we insert \cref{inc-exc3} into this equation, which yields (simplifying because we evaluate all \( \xi \)'s to the same value)
\begin{equation}
\label{explicitc-j2}
\begin{split}
B_{g,n} \tilde{H}_{g,n}(x_{[n]})
&=  \tfrac{1}{3} \sum_{k=1}^n \Big( D_{\xi_1} D_{\xi_2} D_{\xi_3} \tilde{H}_{g-2,n+2}(\xi_1,\xi_2,\xi_3, x_{[n] \setminus \{ k\}}\Big) \Big{|}_{\xi_1 = \xi_2 = \xi_3 = x_k} \\
& \quad + \!\! \sum_{\substack{g_1 + g_2 = g - 1\\ \{ k \} \sqcup K_1 \sqcup K_2 = [n]}} \!\!\!\! \Big( D_{x_k} H_{g_1,1+|K_1|}(x_k,x_{K_1})\Big) \Big( D_{\xi_1} D_{\xi_2} H_{g_2,2+|K_2|} (\xi_1 ,\xi_2 ,x_{K_2})\Big) \Big{|}_{ \xi_1 = \xi_2 = x_k} \\
& \quad + \tfrac{1}{3} \!\!\!\! \sum_{\substack{g_1 + g_2 + g_3 = g\\\{ k\} \sqcup K_1 \sqcup K_2 \sqcup K_3 = [n]}} \! \prod_{j=1}^3 \Big( D_{x_k}H_{g_j,1+|K_j|} (x_k,x_{K_j})\Big) \\
& \quad + \tfrac{1}{12} \sum_{k = 1}^n (2 D_{x_k}^3 - D_{x_k}) \tilde{H}_{g - 1, n} ( x_{[n]}) \,.
\end{split}
\end{equation}
This equation does indeed agree with \cref{cutresummed} for the case \( r = 2\).

\section{Topological recursion for Hurwitz numbers with \texorpdfstring{\( 3\)}{3}-completed cycles}
\label{sec:2-spinTR}

In this section, we show that the generating series for \( 2\)-spin \( q\)-orbifold Hurwitz numbers obey the topological recursion for the following curve derived in \cref{H01}, see also \cite{MSS,SSZ,SSZ12}
\begin{equation}
\mc{C} = \left\{\begin{array}{l}
x= ze^{-z^{2q}}\\
y = z^q \end{array}\right.
\end{equation}
We denote $(\rho_i)_{i = 1}^{qr}$ the (simple) ramification points of $x$ in $\mathcal{C}$, $\sigma_i(z)$ the deck transformation around $\rho_i$, and 
\begin{equation}
\label{SymASymOp}\Delta f(z) = f(z) - f(\sigma_i(z)),\qquad \mathcal{S} f(z) = f(z) + f(\sigma_i(z))
\end{equation}
the (skew)-symmetrization operator defined locally near $\rho_i$. The proof starts from \cref{explicitc-j2} and  

\begin{lemma} \cite{KLPS17}
\label{KeyP} For any $2g - 2 + n > 0$, the formal $n$-differential form $\omega_{g,n} \coloneqq d_1 \dots d_n H_{g,n}$ is the expansion at $x_1 = \ldots = x_n = 0$ of a meromorphic $n$-differential form on $\mathcal{C}^n$, which satisfies
\begin{itemize}
\item[$\bullet$] the linear loop equations: $(\dd x(z))^{-1}\mathcal{S}_{z} D_{x}\omega_{g,n}(z,z_{[n - 1]})$ is holomorphic when $z \rightarrow \rho_i$.
\item[$\bullet$] the projection property: $\omega_{g,n}(z,z_{[n - 1]}) = \sum_{i = 1}^{qr} \Res_{z' \rightarrow \rho_i} \Big(\int_{\rho_i}^{z'} \omega_{0,2}(\cdot,z)\Big)\omega_{g,n}(z',z_{[n - 1]})$. 
\end{itemize}
\end{lemma}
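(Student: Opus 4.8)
The plan is to prove the two properties simultaneously by induction on the Euler characteristic $2g-2+n$, taking the resummed cut-and-join equation \cref{cutresummed} (equivalently \cref{explicitc-j2} for $r=2$) as the recursion that produces $\omega_{g,n}$ out of correlators of strictly smaller $2g-2+n$. The seeds of the induction are the explicit objects already at hand: $H_{0,1}$ with $y=D_{x}H_{0,1}=z^q$ from \cref{H01}, for which \cref{Lkng} is \emph{exactly} the linear loop equation, and the fundamental bidifferential $\omega_{0,2}=\dd z_1\,\dd z_2/(z_1-z_2)^2$, which is $d^{\otimes 2}\tilde H_{0,2}$ in the notation of \cref{cutandjoinP} (cf.\ \cref{W02exp}). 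I work throughout on the genus-zero curve $\mc{C}$, where there are no holomorphic one-forms; this is what makes the reconstruction step below clean.

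For the projection property I would first reduce it to two purely analytic facts about $\omega_{g,n}$ in the variable $z$, valid for $2g-2+n>0$: (a) its only poles lie at the ramification points $\rho_i$, and (b) it is residueless there. Indeed, the kernel $K_i(z',z)=\int_{\rho_i}^{z'}\omega_{0,2}(\cdot,z)$ is a primitive of the fundamental bidifferential, so for a residueless meromorphic one-form whose poles sit only at the $\rho_i$ the sum $\sum_i\Res_{z'\to\rho_i}K_i(z',z)\,\omega_{g,n}(z',z_{[n-1]})$ reproduces its principal parts; since on a rational curve a form is determined by its principal parts alone, this residue sum equals $\omega_{g,n}$. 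Facts (a) and (b) I would establish inductively from \cref{cutresummed}: having absorbed all the rational factors $x_i/(x_k-x_i)$ into $\tilde H_{0,2}$ (see \cref{cutandjoinP}), the right-hand side is built purely from lower correlators acted on by the operators $Q^{(k)}_{d,\emptyset,m}$, none of which can create a pole off the locus $\dd x=0$, i.e.\ off the $\rho_i$; residuelessness is inherited from the seeds in the same way.

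For the linear loop equation I would rewrite the cut-and-join equation as a transport equation in the first variable. Collecting the single term in which $H_{g,n}$ itself occurs on the right, namely $\sum_k\frac{(D_{x_k}H_{0,1})^r}{r!}D_{x_k}H_{g,n}$ (the term flagged in the remark after \cref{cutjco}), and moving it to the left against $B_{g,n}$, the equation becomes $\big(\sum_k\partial_{u_k}+\tfrac{2g-2+n}{r}\big)H_{g,n}=(\text{lower-order terms})$, where $u_k=\ln(qr\,z_k^{qr})$ and $(\tfrac1{qr}-y_k^r)D_{x_k}=\partial_{u_k}$ as in the computation of \cref{H02}. Solving for the first-variable derivative gives $D_{x}H_{g,n}=-t\,\partial_{u}H_{g,n}$, and near $\rho_i$ one has $t\sim\eta^{-1}$, so the potential singularity of $D_{x}H_{g,n}$ is carried entirely by the $\sigma_i$-odd part of $t$. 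Applying the symmetrizer $\mathcal{S}_z$ then kills this divergent part, since $\mathcal{S}_z t$ is finite at $\rho_i$ — a reflection of the invertibility of $\mathcal{S}_z y$ there (\cref{Lkng}) — while the surviving contributions come from the lower-order right-hand side, which by the inductive hypothesis is already holomorphic after symmetrization. This is precisely the assertion that $(\dd x)^{-1}\mathcal{S}_z D_x\omega_{g,n}$ is holomorphic at $\rho_i$.

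The main obstacle I anticipate is the analytic bookkeeping in this last symmetrization: the operators $Q^{(k)}_{d,\emptyset,m}$ apply several powers of $D_x$ (through the factors $\zeta(zD_x)/zD_x$) to the lower correlators before evaluating all dummy variables at $x_k$, and one must check that after $\mathcal{S}_z$ every $\sigma_i$-odd, $\eta$-negative contribution produced this way cancels rather than merely being bounded. Making this precise requires tracking the parity in $\eta$ of each factor near $\rho_i$ together with the genus-defect bookkeeping of \cref{cutresummed}, and it is here that \cref{Lkng} does the decisive work; the projection step, by contrast, is essentially formal once (a) and (b) are in place. This recovers the statement proved by semi-infinite wedge methods in \cite{KLPS17}.
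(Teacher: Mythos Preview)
The paper does not prove this lemma; it is quoted from \cite{KLPS17}, where the argument is of an entirely different nature: the semi-infinite wedge formalism produces an explicit closed expression for $\omega_{g,n}$ that is manifestly polynomial in suitable coordinates on $\mathcal{C}$, and meromorphicity, the pole locus, the linear loop equation and the projection property are read off directly from that formula. Your proposal is an attempt at an alternative, recursive proof via the cut-and-join equation.

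Your plan has a real gap in the linear loop equation step. When you apply $\mathcal{S}_{z_1}$ to the right-hand side of \cref{cutresummed}, the products $\prod_j D_{\xi_j}\tilde H_{g_j,\ldots}(\ldots,\xi_j)\big|_{\xi_j=x_1}$ split, via $\mathcal{S}_z\big(f(z)g(z)\big)=\tfrac12(\mathcal{S}f\,\mathcal{S}g+\Delta f\,\Delta g)$ and its higher-arity analogues, into terms carrying an even number of $\Delta$'s. The all-$\mathcal{S}$ terms are holomorphic by your inductive linear loop equation, but every term with two or more $\Delta$'s is precisely a \emph{quadratic} loop equation combination for strictly lower Euler characteristic, about which your stated induction hypothesis says nothing. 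To close the argument you would have to add the quadratic loop equation to the induction package, at which point you are carrying out the full proof of \cref{thm:r3} or \cref{thm:g0tr}, not a preliminary lemma. A second issue you have not addressed is that extracting $H_{g,n}$ from the cut-and-join equation means inverting the first-order operator $B_{g,n}$; even if the right-hand side is meromorphic on $\mathcal{C}^n$ with poles only at the $\rho_i$, it is not automatic that the unique power-series solution extends to a global meromorphic function with the same pole locus. This global control is exactly what the explicit wedge formula in \cite{KLPS17} supplies for free, and it is the substantive content of the lemma.
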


\begin{theorem}\label{thm:r3}
The differentials \( \omega_{g,n} \) satisfy topological recursion on \( \mc{C} \).
\end{theorem}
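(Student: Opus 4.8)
The plan is to reduce \cref{thm:r3} to a single holomorphicity statement and then extract that statement from the explicit cut-and-join equation. By \cite[Theorem 2.1]{BoSh15}, the differentials $\omega_{g,n}$ satisfy topological recursion on $\mathcal{C}$ if and only if they obey the projection property, the linear loop equation, and the quadratic loop equation. The first two are provided by \cref{KeyP} (from \cite{KLPS17}), so the whole content of \cref{thm:r3} is the quadratic loop equation: for every ramification point $\rho_i$, the doubly skew-symmetrized expression displayed in the introduction must be holomorphic in $z$ near $\rho_i$. Everything below is aimed at this one statement, and the reason for specializing to $r=2$ is that \cref{explicitc-j2} records the cut-and-join equation in a completely explicit and low-degree form.

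First I would pass to the differential form of \cref{explicitc-j2}. Writing $W_{g,n}\coloneqq D_{x_1}\cdots D_{x_n}H_{g,n}$, so that $\omega_{g,n}=\big(\prod_i\tfrac{\dd x_i}{x_i}\big)W_{g,n}$, the combination whose skew-symmetrization the quadratic loop equation controls is, up to the regular prefactor $\prod_i\big(x_i\,\dd x(z_i)\big)^{-1}$, the diagonal value of
\begin{equation}
\mathcal{P}_{g,n}(z',z'';z_{[n]})\coloneqq W_{g-1,n+2}(z',z'',z_{[n]})+\sum_{\substack{g_1+g_2=g\\ I_1\sqcup I_2=[n]}}W_{g_1,|I_1|+1}(z',z_{I_1})\,W_{g_2,|I_2|+1}(z'',z_{I_2})\,.
\end{equation}
Matching genera and numbers of points, one sees that the quadratic loop equation at level $(g,n)$ is governed by \cref{explicitc-j2} taken at level $(g+1,n)$: the triple-$D_\xi$ term there supplies precisely the connected contribution $W_{g-1,n+2}(z',z'',z_{[n]})$, while the genus-split term $D_{x_k}H_{g_1}\cdot D_{\xi_1}D_{\xi_2}H_{g_2}$ supplies $\sum_{g_1+g_2=g}W_{g_1}W_{g_2}$, so that together they assemble $\mathcal{P}_{g,n}$. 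Because $Q_3$ is cubic in $(p,\partial_p)$, the only remaining pieces are the triple product $\prod_{j=1}^3 D_{x_k}H_{g_j}$ and the linear $(2D_{x_k}^3-D_{x_k})$ term; no object of higher degree in the $W$'s occurs, which is exactly why this case is tractable (and, for the same degree reason, so is genus $0$ for all $r$, giving \cref{thm:g0tr}).

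Next I would carry out the one-variable symmetrization announced in the introduction. Taking the distinguished variable $x_k$ as the local variable $z$ at $\rho_i$ and applying the symmetrization $\mathcal{S}_i$ of \cref{SymASymOp}, the left-hand side, now a derivative of a correlator, reduces under $\mathcal{S}_i$ to a single-leg quantity of the form $D_{x_k}$ applied to a correlator, which is holomorphic by the linear loop equation of \cref{KeyP}; the same linear loop equation renders holomorphic every right-hand side term in which the variable $z$ is carried with only one derivative, in particular the triple-product and the $(2D_{x_k}^3-D_{x_k})$ terms once one of the three coincident factors is symmetrized away. What survives is the symmetrization of $\mathcal{P}_{g,n}$ evaluated on the diagonal, which the equation thereby forces to be holomorphic at $\rho_i$. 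A parity argument in the coordinate $\eta$ with $x=x(\rho_i)+\eta^2$, under which $\mathcal{S}_i$ and $\Delta_i$ extract the even and odd parts, then upgrades this symmetric holomorphicity to the skew form $\Delta_i'\Delta_i''\mathcal{P}_{g,n}\big|_{z'=z''=z}$ demanded by the quadratic loop equation.

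The hard part is the local analysis at $\rho_i$ that legitimizes the two peeling-off steps: one must verify that the terms removed by the linear loop equation account exactly for the non-holomorphic principal parts and leave no residual pole, and in particular that the genuinely cubic triple product is fully absorbed once a single factor (including the unstable factor $\omega_{0,1}=y\,\dd x$, which enters $\mathcal{P}_{g,n}$ through $(g_j,|I_j|+1)=(0,1)$) is symmetrized. I would run this computation in the coordinate $\eta$, using \cref{Lkng} -- the invertibility of $\mathcal{S}_i y$ near $\rho_i$ -- to divide out the factors of $\dd x$ and $y$ cleanly and tracking the parity of every factor under $\sigma_i$. The remaining obstacle is purely combinatorial: matching the prefactors $\tfrac16,\tfrac14,\tfrac1{12}$ and the genus constraints of \cref{explicitc-j2} against the summation $g_1+g_2=g$, $I_1\sqcup I_2=[n]$ defining $\mathcal{P}_{g,n}$. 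The inclusion--exclusion already built into \cref{explicitc-j2} does most of this work, and the constant discrepancies between $H_{g,n}$ and $\tilde{H}_{g,n}$ from \cref{cutandjoinP} are irrelevant, as they die under the differentials forming $\omega_{g,n}$.
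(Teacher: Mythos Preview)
Your overall strategy---reduce to the quadratic loop equation via \cite{BoSh15} and \cref{KeyP}, then symmetrize the explicit cut-and-join equation \cref{explicitc-j2} in one variable---is the paper's own, and you are right that the cubic degree of $Q_3$ is what makes the $r=2$ case tractable. But two steps in your proposal do not go through as stated.

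First, the claim that after applying $\mathcal{S}_{z_1}$ ``what survives is the symmetrization of $\mathcal{P}_{g,n}$ evaluated on the diagonal'' is not correct. The right-hand side of \cref{explicitc-j2} is cubic in the diagonal variable, so what you must analyze is $\mathcal{S}_{z_1}f(z_1,z_1,z_1)$ for a trilinear $f$, not the bilinear $\mathcal{S}_{z_1}\mathcal{P}(z_1,z_1)$. The paper uses the identity
\[
\mathcal{S}_{z}f(z,z,z)=\tfrac{1}{4}\big(\mathcal{S}\mathcal{S}\mathcal{S}+2\,\mathcal{S}\Delta\Delta+\Delta\Delta\mathcal{S}\big)f\big|_{\text{diag}},
\]
discards $\mathcal{S}\mathcal{S}\mathcal{S}$ by the linear loop equation, and is then left with genuine $\Delta\Delta$-expressions acting on \emph{two of the three} slots of a cubic expression---not with $\mathcal{S}\mathcal{P}$. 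Your ``parity argument'' from holomorphicity of $\mathcal{S}\mathcal{P}(z,z)$ to holomorphicity of $\Delta'\Delta''\mathcal{P}$ is therefore aiming at the wrong target, and in any case such an upgrade is not automatic: the identity $2\,\mathcal{S}_z\mathcal{P}(z,z)=\mathcal{S}'\mathcal{S}''\mathcal{P}+\Delta'\Delta''\mathcal{P}$ would let you conclude only if you separately knew $\mathcal{S}'\mathcal{S}''\mathcal{P}$ is holomorphic on the diagonal, which for the full $\mathcal{P}$ (including the unstable $\tilde H_{0,2}$ contributions with their diagonal singularities) is not given by the linear loop equation alone.

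Second, and more seriously, you never invoke \emph{induction on the Euler characteristic}, which is the engine of the argument. After the trilinear decomposition above, the surviving $\mathcal{S}\Delta\Delta$ and $\Delta\Delta\mathcal{S}$ terms reorganize into two types: (i) $\mathcal{S}_{z_1}D_{x_1}$ applied to a quadratic-loop expression of strictly smaller Euler characteristic, and (ii) a product $\big(\mathcal{S}_{z_1}D_{x_1}\tilde H_{g_2,|K_2|+1}\big)\cdot\big(\Delta\Delta\text{-QLE at }(g_1,K_1)\big)$. Both are holomorphic \emph{only} by the inductive hypothesis that the quadratic loop equation already holds at lower levels; the unique term escaping the induction is the one with $(g_1,K_1)=(g,[n]\setminus\{1\})$, which factors as $(\mathcal{S}_{z_1}y_1)\cdot E([n]\setminus\{1\},x_1)$, and \cref{Lkng} then lets you divide by the invertible $\mathcal{S}_{z_1}y_1$. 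Without this inductive scaffolding, the cubic triple product cannot be ``fully absorbed once a single factor is symmetrized'': the pieces where the two $\Delta$'s land on factors with nontrivial $(g_j,K_j)$ are precisely lower-level quadratic loop equations, and you need to have already established them.
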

\begin{proof}
According to \cite{BoSh15}, it suffices to prove the quadratic loop equations, which are tantamount to saying that for any $2g - 2 + n \geq 0$ and $i \in [qr]$,
\begin{equation}\label{eq:Quale}
\Delta_{z} \Delta_{z'} D_{x} D_{x'} \tilde{H}_{g,2,n} \big( x(z),x(z') \mid x_{[n]}\big) \big|_{z' =z} 
\end{equation}
is holomorphic in \( z\) near $\rho_i$. Here
\begin{equation}
\tilde{H}_{g,2,n}(x,x' \mid x_{[n]}) = \tilde{H}_{g-1,n+2}(x,x',x_{[n]}) + \sum_{\substack{K_1 \sqcup K_2 = [n]\\ g_1+g_2 = g}} \tilde{H}_{g_1,|K_1|+1}(x,x_I) \tilde{H}_{g_2,|K_2|+1}(x',x_J)
\end{equation}
We will prove the quadratic loop equations in a way similar to \cite{DKOSS13}, fixing a ramification point $\rho_i$ for the remaining of the proof.\par
First, we apply \( \mc{S}_{z_1} \) to \cref{explicitc-j2}. By the linear loop equation, \( \mc{S}_z H_{g,n}(z,z_2, \dotsc, z_n) \) is holomorphic, and because \( x \) is invariant under the local involution \( \sigma_i \) by definition, \( D_x \) commutes with \( \mc{S}_z \). Hence, the left-hand side is holomorphic, just like the last term of the right-hand side and all terms in the \( k\)-sums except for \( k=1\).\par
On the other terms of the right-hand side, we use the elementary identity
\begin{equation}
\mc{S}_zf(\underbrace{z,\dotsc, z}_{r \text{ times}}) = 2^{1-r} \!\!\! \sum_{\substack{I \sqcup J = \llbracket r \rrbracket \\ |J|\,\,\text{even}}} \Big( \prod_{i \in I} \mc{S}_{z_i} \Big) \Big( \prod_{j \in J} \Delta_{z_j} \Big)f(z_1,\dotsc, z_r) \Big|_{z_i = z}\,,
\end{equation}
which in our case reduces to (using that all our choices for \( f\) will be invariant under the exchange of \( z \) and \( z'\))
\begin{equation}
\mc{S}_zf(z,z,z) =\tfrac{1}{4} \big( \mc{S}_z \mc{S}_{z'} \mc{S}_{z''} + 2\mc{S}_z\Delta_{z'} \Delta_{z''} 
+ \Delta_z \Delta_{z'} \mc{S}_{z''} \big) f(z,z',z'') \big|_{z'=z''=z}\,.
\end{equation}
Again by the linear loop equations, the first term in the operator on the right-hand side results in holomorphic terms. Here we also used that the differentials, except the case \( (g,n) = (0,2)\), do not have diagonal poles. In this exceptional case, we only added a polar part if just one of the arguments was a \( \xi \), so we avoid the diagonal poles here as well.\par
To prove the quadratic loop equations, we use an induction on the Euler characteristic of the factors on which the \( \Delta \)-operators act: they either act on the same factor \( H_{g,n} \), in which case the Euler characteristic is given by \( -\chi = 2g-2+n\), or on separate factors, in which case the Euler characteristics of the factors must be added.\par
So consider the symmetrization of \cref{explicitc-j2} for \( (g,n)\) and assume the quadratic loop equations have been proved for all pairs \( (g',n')\) with \( 2g' - 2 +n' < 2g - 2+n\). We will split the equation into two parts. First consider the terms
\begin{align}
&\tfrac{1}{3} \big( 2\mc{S}_{z_1} \Delta_{z_1'}\Delta_{z_1''} + \Delta_{z_1} \Delta_{z_1'} \mc{S}_{z_1''} \big) D_{x_1} D_{x_1'} D_{x_1''} \tilde{H}_{g-2,n+2} (x_1',x_1'',x_{[n]} ) \big|_{x_i'=x_i''=x_i}\\
&\qquad + \! \sum_{\substack{K_1 \sqcup K_2 = [n] \setminus 1\\ g_1+g_2 = g-1}} \!\!\!\! 2\mc{S}_{z_1} \Delta_{z_1'} D_{x_1} D_{x_1'} \tilde{H}_{g_1,|K_1|+2}(x_1,x_1',x_{K_1})\big|_{x_1'=x_1} \!\! \Delta_{z_1}D_{x_1} \tilde{H}_{g_2,|K_2|+1}(x_1,x_{K_2}) \\
&= \mc{S}_{z_1} D_{x_1} \bigg( \Delta_{z_1'}\Delta_{z_1''} \Big( D_{x_1'} D_{x_1''} \tilde{H}_{g-2,n+2} (x_1',x_1'',x_{[n]} ) \\
&\qquad \qquad+ 2\!\! \sum_{\substack{K_1 \sqcup K_2 = [n] \setminus 1\\ g_1+g_2 = g-1}} \!\!\!\! D_{x_1'} \tilde{H}_{g_1,|K_1|+2}(x_1,x_1',x_{K_1}) D_{x_1''} \tilde{H}_{g_2,|K_2|+1}(x_1'',x_{K_2}) \Big)\bigg|_{x_1'=x_1''} \bigg) \bigg|_{x_1'=x_1}\,.
\end{align}
Now, for this last term, we use that
\begin{align}
&  \sum_{\substack{K_1 \sqcup K_2 = [n] \setminus 1\\ g_1+g_2 = g-1}} \!\!\!\! D_{x_1'} \tilde{H}_{g_1,|K_1|+2}(x_1,x_1',x_{K_1}) D_{x_1''} \tilde{H}_{g_2,|K_2|+1}(x_1'',x_{K_2}) \Big)\Big|_{x_1'=x_1''}  \\
&\qquad \qquad \qquad \qquad \qquad \qquad \qquad  = \tfrac{1}{2} \sum_{\substack{K_1 \sqcup K_2 = [n] \\ g_1+g_2 = g-1}} \!\!\!\! D_{x_1'} \tilde{H}_{g_1,|K_1|+1}(x_1',x_{K_1}) D_{x_1''} \tilde{H}_{g_2,|K_2|+1}(x_1'',x_{K_2}) \Big)\Big|_{x_1'=x_1''}\,,
\end{align}
because we have made of a choice of the set containing \( x_1\).\par
Hence, these terms together, before application of \( \mc{S}_{z_1} D_{x_1}|_{x_1'=x_1}  \), are the combination appearing in a quadratic loop equation, which is holomorphic by the induction hypothesis. Hence it is holomorphic after application of \( \mc{S}_{z_1} D_{x_1}|_{z_1'=x_1} \) as well. Again, we used that the differentials do not have diagonal poles.\par
The remaining terms are
\begin{align}
&  \Delta_{z_1} \Delta_{z_1'} \bigg( \sum_{\substack{K_1 \sqcup K_2 = [n] \setminus 1\\ g_1+g_2 = g-1}} \!\!\!\!\! D_{x_1} D_{x_1'} \tilde{H}_{g_1,|K_1|+2}(x_1,x_1',x_{K_1})\big|_{x_1'=x_1} \!\! \mc{S}_{z_1}D_{x_1} \tilde{H}_{g_2,|K_2|+1}(x_1,x_{K_2})\bigg)  \\
&+ \tfrac{1}{3} (2\mc{S}_{z_1} \Delta_{z_1'} \Delta_{z_1''} + \Delta_{z_1} \Delta_{z_1'} \mc{S}_{z_1''} ) \sum_{\substack{K_1 \sqcup K_2 \sqcup K_3 = [n] \setminus 1\\ g_1+g_2+g_3 = g}} \prod_{a=1}^3 D_{x_1} \tilde{H}_{g_a,|K_a|+1}(x_1,x_{K_a}) \,,
\end{align}
which can be written as
\begin{align}
&\sum_{\substack{K_1 \sqcup K_2 = [n] \setminus 1 \\ g_1+g_2 = g}} \mc{S}_{z_1} D_{x_1} \tilde{H}_{g_2,|K_2|+1}(x_1,x_{K_2} )\\[-1em]
&\qquad \qquad \cdot \Delta_{z_1} \Delta_{z_1'} D_{x_1} D_{x_1'} \Big( \tilde{H}_{g_1-1,|K_1|+2}(x_1,x_1',x_{K_1}) + \!\!\!\!\!\sum_{\substack{K_1' \sqcup K_1'' =K_1\\ g_1' + g_1'' = g_1}} \!\! \tilde{H}_{g_1',|K_1'|+1}(x_1,x_{K_1'}) \tilde{H}_{g_1'',|K_1''|+1}(x_1',x_{K_1''})\Big)\Big|_{x_1'=x_1}\,.
\end{align}
In this product, the first factor is holomorphic by the linear loop equations, while the second factor is exactly the combination appearing in the quadratic loop equation. By the induction hypothesis, the second factor is holomorphic as well, unless \( (g_1,K_1) = (g,[n]\setminus 1)\). Hence the only possibly non-holomorphic term in \cref{explicitc-j2} is
\begin{equation}
\Big( \mc{S}_{z_1} D_{x_1} \tilde{H}_{0,1}(x_1 )\Big) \cdot \Big( \Delta_{z_1} \Delta_{z_1'} D_{x_1} D_{x_1'} \tilde{H}_{g,2,n-1}(x_1,x_1' \mid x_{[n]}) \Big)\Big|_{x_1'=x_1}\,,
\end{equation}
which must therefore be holomorphic as well. We have $D_{x_1} \tilde{H}_{0,1} (x_1) = y_1$, and \cref{Lkng} guarantees that $\mathcal{S}_{z_1}D_{x_1}\tilde{H}_{0,1}(x_1)$ is invertible near $\rho_i$. This implies the quadratic  loop equations for \( (g,n)\).
\end{proof}


\section{Topological recursion in genus zero} \label{sec:genus0}

In this section we prove that the genus-zero differentials $\omega_{g=0,n}= d^{\otimes n} H_{0,n}$ satisfy the  topological recursion relation for any integers $r$ and $q$. We do this in two steps. Firstly, we specialize the cut-and-join \cref{cutresummed} to genus zero. Secondly, we apply the symmetrizing operator to both sides of the equation and we analyze the holomorphicity of the terms in order to prove, by induction of the Euler characteristic, the quadratic loop equation of \cref{eq:Quale} in genus zero.

Let us now consider \cref{cutresummed}. For $g=0$, we have $g_j=0$ for every $j \in [l]$, the genus defect $d$ must also be zero, and $l=m=r+1$ should hold. This implies that the cardinality of every set $M_j$ must be equal to one. Therefore the choice of the sets $M_j$ is equivalent to the choice of a permutation of $r+1$ elements. By introducing these simplifications, the cut-and-join equation restricts to

\begin{equation}
 \frac{B_{0,n}}{r!}\tilde{H}_{0,n}(x_{[n]})  = \!\!  \frac{1}{(r+1)!(r+1)!} \!\!\!\!\!\!\! \sum_{\substack{ \sigma \in \mathfrak{S}_{r+1} \\ \{ k\} \sqcup \bigsqcup_{j=1}^{r+1} K_j = [n] }} \!\!\!\! \prod_{j = 1}^{r+1} D_{\xi_j} \bigg[\prod_{j = 1}^{r+1} \tilde{H}_{0,|K_j| + 1}(x_{K_j},\xi_{\sigma(j)})\bigg] \bigg|_{\xi_j = x_k}\,,
\end{equation}
where \( B_{0,n} \coloneqq \frac{1}{r} \big(n - 2+ \frac{1}{q}\sum_{i=1}^n D_{x_i} \big) \). Every operator $D_{\xi_j}$ only acts on the factor with the corresponding variable and, after the substitution $\xi_j = x_k$, every summand gives the same term $|\mathfrak{S}_{r+1}| = (r+1)!$ times, so we get: 
\begin{equation}\label{g0cnj}
(r+1)B_{0,n}\tilde{H}_{0,n}(x_{[n]})  = \!\!\!\!\! \sum_{\substack{ \{ k\} \sqcup \bigsqcup_{j=1}^{r+1} K_j = [n] }} \!   \bigg[\prod_{j = 1}^{r+1} D_{x_k} \tilde{H}_{0,|K_j| + 1}(x_{K_j},x_k)\bigg]\,.
\end{equation}
We are now ready to state and prove the following theorem.
\begin{theorem}\label{thm:g0tr}
The differentials $(\omega_{0,n})_{n \geq 3}$ satisfy the restriction to genus-zero sector of the topological recursion on \( \mc{C} \).
\end{theorem}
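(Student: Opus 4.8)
The plan is to mirror the inductive strategy of \cref{thm:r3}, adapting it to the genus-zero cut-and-join \cref{g0cnj}. The goal is to show that the quadratic loop expression \cref{eq:Quale}, namely $\Delta_z \Delta_{z'} D_x D_{x'} \tilde{H}_{0,2,n}(x(z),x(z') \mid x_{[n]})|_{z'=z}$, is holomorphic near each ramification point $\rho_i$. Since $g=0$, the two-disconnected generating function simplifies to $\tilde{H}_{0,2,n}(x,x'\mid x_{[n]}) = \sum_{K_1 \sqcup K_2 = [n],\, g_1=g_2=0} \tilde{H}_{0,|K_1|+1}(x,x_{K_1})\tilde{H}_{0,|K_2|+1}(x',x_{K_2})$, with no $\tilde{H}_{g-1,n+2}$ term surviving because $g-1<0$. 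So everything is expressed purely in terms of genus-zero correlators, and the induction runs on $n$ (equivalently on the Euler characteristic $-\chi = n-2$).

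First I would apply the symmetrizing operator $\mc{S}_{z_k}$ in one chosen variable $x_k$ to \cref{g0cnj}. As in \cref{thm:r3}, the left-hand side $(r+1)B_{0,n}\tilde{H}_{0,n}$ becomes holomorphic after symmetrization: $D_x$ commutes with $\mc{S}_z$ because $x$ is $\sigma_i$-invariant, and $\mc{S}_z \tilde{H}_{0,n}$ is holomorphic by the linear loop equation of \cref{KeyP}. On the right-hand side, the $(r+1)$-fold product $\prod_{j=1}^{r+1} D_{x_k}\tilde{H}_{0,|K_j|+1}(x_{K_j},x_k)$ must be symmetrized in the variable $x_k$ (which appears in every factor). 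Using the elementary multi-variable identity $\mc{S}_z f(z,\dots,z) = 2^{1-(r+1)} \sum_{I \sqcup J = \llbracket r+1 \rrbracket,\, |J|\text{ even}} (\prod_{i\in I}\mc{S}_{z_i})(\prod_{j\in J}\Delta_{z_j}) f |_{z_i=z}$, I would separate the purely symmetric term ($J=\emptyset$, holomorphic by the linear loop equations) from the terms carrying at least one $\Delta$-pair.

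The heart of the argument is then to isolate the single factor that cannot be controlled by the induction hypothesis, exactly as in \cref{thm:r3}. The terms with a $\Delta_z\Delta_{z'}$ pair acting on a product of correlators are, by induction on $n$, holomorphic precisely when the $\Delta$-operators act on factors of strictly smaller Euler characteristic; the one exceptional contribution is where the $\Delta$-pair lands on the combination reconstructing $\tilde{H}_{0,2,n-1}(x,x'\mid x_{[n]\setminus\{k\}})$ while the remaining $r-1$ factors are collected under $\mc{S}_{z_k} D_{x_k}$. Combining these, I expect the surviving non-holomorphic term to factor as $\big(\mc{S}_{z_k}D_{x_k}\tilde{H}_{0,1}(x_k)\big)^{r-1}$ times $\Delta_{z_k}\Delta_{z_k'} D_{x_k}D_{x_k'}\tilde{H}_{0,2,n-1}$, up to holomorphic terms. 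Since $\mc{S}_{z}D_x\tilde{H}_{0,1} = \mc{S}_z y$ is invertible near $\rho_i$ by \cref{Lkng}, dividing by this nonvanishing factor forces the quadratic loop expression for $(0,n)$ to be holomorphic, completing the induction.

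The main obstacle is the bookkeeping in the symmetrization step: with $r+1$ identical factors rather than the three of the $r=2$ case, one must carefully count how the $\binom{r+1}{2}$ ways of placing a $\Delta$-pair, together with the symmetric collapse of the other factors, assemble into the genus-zero $\tilde{H}_{0,2,n-1}$ combination with the correct combinatorial coefficient, and verify that every genuinely non-holomorphic contribution is absorbed into this single term. The reindexing that converts $\sum_{K_1\sqcup K_2 = [n]\setminus\{k\}} D\tilde{H}_{0,\cdot}(x)D\tilde{H}_{0,\cdot}(x')$ into the symmetric $\tfrac{1}{2}\sum_{K_1\sqcup K_2 = [n]}$ form of $\tilde{H}_{0,2,n-1}$ — arising from the freedom of placing $x_k$ in either block, as in the analogous step of \cref{thm:r3} — is the delicate point that must be handled to match coefficients exactly. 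Once this is in place, the invertibility of $\mc{S}_z y$ does the rest and the restriction of topological recursion to the genus-zero sector follows from \cite{BoSh15}.
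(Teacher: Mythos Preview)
Your outline is correct in spirit but has a genuine gap: you only account for the $|J|=2$ terms in the expansion $\mc{S}_z f(z,\dots,z) = 2^{-r}\sum_{|J|\text{ even}}(\prod_I \mc{S})(\prod_J \Delta)f$. For $r \geq 3$ there are also terms with $|J| = 4, 6, \ldots$, where four or more $\Delta$-operators act simultaneously on the product $\prod_{j=1}^{r+1} D_{x_k}\tilde{H}_{0,|K_j|+1}$. The induction hypothesis only controls the \emph{quadratic} combination $\Delta\Delta$ applied to a two-fold product summed over splittings; it says nothing directly about $\Delta^{\otimes 2t}$ acting on a $2t$-fold product for $t \geq 2$. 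The paper closes this gap with a separate combinatorial lemma (\cref{lem:topdelta}) proving that
\[
\Delta^{(1)}\cdots\Delta^{(2t)}\!\!\sum_{\bigsqcup_{j} I_j=[n]\setminus\{1\}}\prod_{j=1}^{2t} D\tilde{H}_{0,|I_j|+1}\Big|_{x^{(j)}=x_1}
= \sum_{\bigsqcup_{j} K_j=[n]\setminus\{1\}}\prod_{j=1}^{t} E(K_j,x_1),
\]
a product of $t$ quadratic-loop expressions. Only with this factorization can the induction hypothesis be applied factor by factor, and the proof of this lemma is itself a nontrivial generating-function identity.

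This also changes the form of the surviving non-holomorphic term: it is not simply $(\mc{S}_z y)^{r-1}E([n]\setminus\{k\},x_k)$, but $E([n]\setminus\{k\},x_k)$ multiplied by a sum $\binom{r+1}{2}(\mc{S}_z y)^{r-1} + \binom{r+1}{2,2}(\mc{S}_z y)^{r-3}(\Delta_z y)^{2} + \cdots$, with a contribution from every even $|J|\geq 2$. The higher terms vanish at the ramification point because $\Delta_z y = O(\eta)$, so invertibility still comes from the leading term via \cref{Lkng}; but you must establish \cref{lem:topdelta} (or an equivalent factorization) before you can reach that conclusion. Incidentally, the reindexing trick you worry about at the end is not needed in genus zero: the expression $E(\overline{K},x_k)$ in \cref{Quale} is already written as $\Delta\Delta\sum_{I\sqcup J=\overline{K}}(\cdots)$, which is exactly what emerges from the $|J|=2$ term without any repackaging.
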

\begin{proof}
The strategy of the proof is analogous to the proof of \cref{thm:r3}. Indeed, \cite{KLPS17} shows that \cref{KeyP} holds for arbitrary $r$ and $q$.  As explained there, it suffices to prove the quadratic loop equation. In genus zero the quadratic loop equation for $n+1$ simplifies to the statement that the function
\begin{equation}
\label{Quale} E([n], x) \coloneqq \Delta_{z} \Delta_{z'} D_{x} D_{x'} \sum_{I \sqcup J = [n]} \tilde{H}_{0,|I|+1}(x(z),x_I) \tilde{H}_{0,|J|+1}(x(z'),x_J) \big|_{z' =z}
\end{equation}
is holomorphic in \( z\) near the ramification points of $x$, for \( 2g-2+n >0\). As before, we fix a ramification point $\rho_i$, and $\mathcal{S}$ and $\Delta$ denote the symmetrization and skew-symmetrization operators around $\rho_i$ introduced in \cref{SymASymOp}.
We argue by induction on the Euler characteristic of the factors on which the \( \Delta \)-operators act. Since the genus is equal to zero, this is an induction on $n$. Let us assume that the quadratic loop equations have been proved for all \( n' < n-1 \) and let us prove the quadratic loop equation for \(n-1\).\par
Let us apply the operator $\mc{S}_{z_1}$ to both sides of \cref{g0cnj}. The left-hand side is again holomorphic, and so are all the terms in the $k$-sums in the right-hand side, except possibly for $k=1$. Therefore the function obtained by the action of $\mathcal{S}_{z_1}$ on 
\begin{equation}\label{g0k1}
\sum_{\substack{ \bigsqcup_{j=1}^{r+1} K_j = [n]\setminus \{1\} }} \!   \bigg[\prod_{j = 1}^{r+1} D_{x_1} \tilde{H}_{0, |K_j| + 1}(x_{K_j},x_1)\bigg] =: f(x_1, \dots, x_1)
\end{equation}
should result in a holomorphic function in $z_1$. The action of $\mc{S}_{z_1}$ can be written as

\begin{equation}
\label{g0k2} \mc{S}_{z_1} f(\underbrace{x_1,\dotsc, x_1}_{r+1 \text{ times}}) 
 = 2^{-r} \!\!\! \sum_{\substack{I \sqcup J = [r+1] \\ |J|\,\,\text{even}}} \Big( \prod_{i \in I} \mc{S}_{z_1^{(i)}} \Big) \Big( \prod_{j \in J} \Delta_{z_1^{(j)}} \Big)f(x_1^{(1)},\dotsc, x^{(r+1)}_1) \Big|_{z_1^{(i)} = z_1}\,,
 \end{equation}
where we keep using the convention $x_i = x(z_i)$ and $x_1^{(i)} = x(z_1^{(i)})$ also for the new variables $x_1^{(i)}$ to shorten the notation. Let us examine the action of the different summands of the operator in the expansion above. For $J = \emptyset $, the summands produced by the action of $\prod_{i =1}^{r+1} \mc{S}_{z_i}$ are holomorphic by the linear loop equation. The first term that can possibly create non-holomorphic terms is for $|J|=2$. In that case, up to re-labeling the variables (which does not change the result since $f$ is symmetric), the term we get after the substitution $x_1^{(i)} = x_1$ reads
\begin{equation}
\begin{split}
&\sum_{\substack{ \bigsqcup_{j=1}^{r-1} K_j \sqcup \overline{K}= [n]\setminus \{1\} }}\!  
\bigg[
 \prod_{j =1}^{r-1} \mc{S}_{z_1} D_{x_1} \tilde{H}_{0, |K_j| + 1}(x_{K_j},x_1)
 \bigg] 
 \\
 &\qquad \qquad \qquad \times
  \Delta_{z_1^{(r)}} \Delta_{z_1^{(r+1)}} D_{x_1^{(r)}} D_{x_1^{(r+1)}}  \!\!\!\!\!\! \sum_{\substack{ K_r \sqcup K_{r+1} =\overline{K}}}\! 
\tilde{H}_{0, |K_{r}| + 1}(x_{K_{r}},x_1^{(r)})  \tilde{H}_{0, |K_{r+1}| + 1}(x_{K_{r+1}},x_1^{(r+1)})\big|_{z_1^{(r)} = z_1^{(r+1)} = z_1}\,.
\end{split}
\end{equation}
The first $r-1$ factors are holomorphic by the linear loop equation, whereas the second summation is holomorphic by induction hypothesis, with the exception of the one case $\overline{K} = [n] \setminus \{1\}$.  In that case we obtain the term
\begin{equation}
\left(\mc{S}_{z_1} y_1 \right)^{r-1}E\left([n] \setminus \{1\}, x_1\right)\,.
\end{equation}
since for $K_j$ empty we have
\begin{equation}
D_{x_1^{(j)}} \tilde{H}_{0, |K_j| + 1}(x_{K_j},x_1^{(j)}) = D_{x_1^{(j)}}\tilde{H}_{0,1}(x_1^{(j)}) = D_{x_1^{(j)}}H_{0,1}(x_1^{(j)})  = y_1^{(j)}\,.
\end{equation}

We remark that $(\mathcal{S}_{z_1}y_1)^{r - 1}$ is invertible near $\rho_i$ due to \cref{Lkng}. In order to deal with the terms for $|J| > 2$, we use the following lemma, whose proof is given at the end.

\begin{lemma}\label{lem:topdelta} For any $t \geq 2$, we have
\begin{equation}
\Delta^{z_1^{(1)}}\cdots \Delta^{z_1^{(2t)}}  \!\!\!\!\!\!
\sum_{\substack{ \bigsqcup_{j=1}^{2t} I_j = [n]\setminus \{1\} }} \!   \bigg[\prod_{j = 1}^{2t} D_{x_1^{(j)}} \tilde{H}_{0, |I_j| + 1}(x_{I_j},x_1^{(j)})\bigg]\bigg{|}_{x_1^{(i)} = x_1} = 
\sum_{K_1 \sqcup \dots \sqcup K_t = [n] \setminus \{1\}} \prod_{j=1}^t E(K_j, x_1).
\end{equation}
\end{lemma}

According to \cref{lem:topdelta}, a term with $|J| > 2$ in \cref{g0k1} expanded with help of \cref{g0k2} factorizes in $t = |J|/2 > 1$ quadratic loop equations multiplied by $ (r+1) - |J|$ factors of the form $\mc{S}_{z_1}D_{x_1}\tilde{H}_{0,|K_i|+1}$, which are holomorphic in $z_1$ thanks to the linear loop equation. As before, by the inductive hypothesis every quadratic loop equation factor is holomorphic, except for the one case in which one of the sets $K_i$ is equal to the whole set $[n] \setminus \{1\}$. In that case the obtained term is of the form
\begin{equation}
(\mc{S}_{z_1}y_1)^{r+1 - |J|}(\Delta_{z_1}y_1)^{|J|-2} E([n] \setminus \{1\}, x_1)\,.
\end{equation}
Collecting all the terms in which $E([n]\setminus \{1\}, x_1)$ appears we obtain the equation
\begin{equation}
E([n]\setminus \{1\}, x_1) \bigg[ \binom{r+1}{2} \left( \mc{S}_{z_1}y_1\right)^{r-1} + \binom{r+1}{2,2} \left(\mc{S}_{z_1}y_1\right)^{r-3}(\Delta_{z_1}y_1)^{2} + \cdots \bigg] = \textrm{ holomorphic in \( z_1\).}
\end{equation}
In local the coordinate $\eta$ around the ramification point $\rho_i$, we have $\Delta_{z_1}y_1 = \mc{O}(\eta)$, and so is $(\Delta_{z_1} z_1)^{2l}$ for $ l>0$. Therefore, using \cref{Lkng} the factor that multiplies $E([n]\setminus \{1\}, x_1)$ has a non-zero limit $2^{r - 2}\tfrac{r + 1}{q} (qr)^{-\frac{1}{r}}$ when $z_1 \rightarrow \rho_i$, which comes only from the first term. This factor is thus is invertible with respect to multiplication. This proves the quadratic loop equation expression for \(n-1\) is holomorphic, and hence by induction this holds for every $n \geq 1$. This concludes the proof of \cref{thm:g0tr}.
\end{proof}

\begin{proof}[Proof of \cref{lem:topdelta}]
We will prove the statement by computing the multiplicity with which a generic summand appears in the left and in the right-hand side. The fact that these two multiplicities coincide is equivalent to a simple combinatorial identity that we prove in the second part.

Let us consider first the case of the summand with an even amount of $H_{0,1}$ factors:
\begin{equation}
\big(\Delta_{z_1} D_{x_1} H_{0,1}(x_1) \big)^{2p} \Delta D_{x_1} \tilde{H}_{0,|I_1|+1}(x_{I_1}, x_1) \dots \Delta_{z_1} D_{x_1} \tilde{H}_{0,|I_{2l}|+1}(x_1, x_{I_{2l}})\,.
\end{equation}

Computing the multiplicity with which this summand appears in the left-hand side is straightforward. There are $2t$ ways to assign the set $I_1$ to a factor, $2t-1$ ways to assign the set $I_2$ and so forth up to $I_{2l}$, hence the multiplicity amounts to \( \frac{(2t)!}{(2p)!}\). Let us now work out the combinatorics for the right-hand side. Let $v$ be the number of empty sets $K_j$. We have to consider the cases $v = 0, \dots, p$ and sum up their contributions. Let us select the $v$ sets $K_j$ which are empty, this can be done in $\binom{t}{v}$ ways. Among the remaining $t-v$ sets, we have to select which are responsible for the appearance of one empty and one non-empty set in their corresponding quadratic loop equation \eqref{Quale}. Since every empty set that is not yet paired with another empty set must be paired with a non-empty set, this can be done in $\binom{t-v}{2(p-v)}$ ways. We select $2(p-v)$ non-empty sets that have to be paired with the empty ones in $\binom{2l}{2(p-v)}$ ways, and multiply by the number of bijections $\left(2(p-v)\right)!$. Let now the multinomial coefficient $\binom{2l - 2(p-v)}{2, \dots, 2}$ account for all possible pairs of non-empty sets with other non-empty sets. Finally, we multiply by a factor of $2$ for each pair that involves at least one non-empty set $I_i$. Hence we get the quantity
\begin{equation}\label{countlhs}
\sum_{v=0}^p \binom{t}{v} \binom{t-v}{2(p-v)}\binom{2l}{2(p-v)} \left(2(p-v)\right)! \binom{2l - 2(p-v)}{2, \dots, 2} 2^{t-v}\,.
\end{equation}
By simplifying the binomial coefficients, setting $m= p-v$, and dividing both sides by $(2l)!$, we see that the two multiplicities coincide if and only if the following equality is satisfied:
\begin{equation}
\sum_{m=0}^{\infty}  \binom{p+l}{p-m, l-m,2m}2^{2m} = \binom{2p + 2l}{2p}\,.
\end{equation}
with the convention that the multinomial coefficient vanishes whenever one argument in its factorials is negative.
In order to prove this equality, let us consider and rearrange the following bivariate generating series
\begin{align}
\sum_{p,l=0}^{\infty} \sum_{m=0}^{\infty} \binom{p+l}{p-m, l-m, 2m}2^{2m} X^{2p} Y^{2l}  =& \sum_{m=0}^{\infty} (2XY)^{2m} \sum_{p',l'=0}^{\infty}  \binom{p'+l' + 2m}{p', l', 2m} X^{2p'} Y^{2l'}\\
=& \sum_{m=0}^{\infty} (2XY)^{2m} \sum_{q=0}^{\infty}  \binom{q + 2m}{q} \sum_{i=0}^{q} \binom{q}{i} X^{2i} Y^{2(q-i)}\,.
\end{align}
By Newton's formula, this becomes
\begin{align}
\sum_{m=0}^{\infty} (2XY)^{2m} \sum_{q=0}^{\infty}  \binom{q + 2m}{q} (X^2 +Y^2)^q =& \left(\frac{1}{1-(X^2 + Y^2)} \right)\sum_{m=0}^{\infty} \left( \frac{2XY}{1-(X^2 + Y^2)}\right)^{2m}\\ 
=&\, \frac{1 - (X^2 + Y^2)}{(X^2 + Y^2 - 1 - 2XY - 1)(X^2 + Y^2 - 1 + 2XY)} \\
=&\,\frac{1}{2}\left[ \frac{1}{1 - (X^2 + Y^2) - 2XY} + \frac{1}{1 - (X^2 + Y^2 ) + 2XY}\right] \\
=&\,\left[\sum_{m=0}^\infty(X+Y)^{2m} \right]^{\text{even in }Y} = \sum_{p,l=0}^{\infty} \binom{2l + 2p}{2p} X^{2p} Y^{2l}\,.
\end{align}
Extracting then the coefficient $X^{2p}Y^{2l}$ from the first and the last term yields the desired equality. In case the amount of $H_{0,1}$ factors is odd (say, $2p+1$), it is enough to prove
\begin{equation}
\sum_{m=0}^{\infty}  \binom{p+l}{p-m, l-m-1,2m+1}2^{2m+1} = \binom{2p + 2l}{2p+1}\,,
\end{equation}
which can be done in the same way as above by setting $p' = p-m$ and $l' = l - m -1$.
This concludes the proof of the lemma.
\end{proof}

\newcommand{\etalchar}[1]{$^{#1}$}

\end{document}